\documentclass[12pt,twoside,leqno]{amsart}
\usepackage{amsmath,amssymb,
amsbsy,amsfonts,latexsym,amsopn,
amstext,cite,amsxtra,euscript,amscd,bm,mathabx}
\usepackage{url}

\usepackage[colorlinks,linkcolor=blue,
anchorcolor=blue,citecolor=blue,backref=page]{hyperref}
\usepackage{color}
\usepackage{graphics,epsfig}
\usepackage{graphicx}
\usepackage{float}
\usepackage{epstopdf}
\hypersetup{breaklinks=true}

\usepackage[norefs,nocites]{refcheck}

\newtheorem{theorem}{Theorem}[section]
\newtheorem{lemma}[theorem]{Lemma}

\newtheorem{corollary}[theorem]{Corollary}

\numberwithin{equation}{section}
\numberwithin{theorem}{section}

\def\e{{\mathbf{\,e}}}

\newcommand{\floor}[1]{\lfloor #1 \rfloor}

\newcommand{\R}{\mathbb{R}}

\newcommand{\N}{\mathbb{N}}
\newcommand{\Z}{\mathbb{Z}}
\newcommand{\T}{\mathbb{T}}

\newcommand{\PP}{\mathbb{P}}
\newcommand{\EE}{\mathbb{E}}

\newcommand{\G}{\mathcal{G}}

\newcommand{\I}{\mathcal{I}}

\def\vy{\mathbf y}

\def\vx{\mathbf x}

\def\vu{\mathbf u}
\def\vv{\mathbf v}
\def\vw{\mathbf w}

\title{On uniform distribution of  $\alpha\beta$-orbits}

\author[C. Chen]{Changhao Chen}

\address{Department of Pure Mathematics, University of New South Wales,
Sydney, NSW 2052, Australia}
\email{changhao.chenm@gmail.com}

\author[X. Wang]{Xiaohua Wang}

\address{China Economics and Management Academy, Central University of Finance and Economics, Beijing 100081, China}
\email{fridman\_wxh@163.com}

\author[S. Wen]{Shengyou Wen}

\address{Department of Mathematics and Key Laboratory of Applied Mathematics,  Hubei University, Wuhan 430062, China}
\email{sywen\_65@163.com }

\keywords{Uniform distribution module one, $\G_\delta$ set, Hausdorff dimension}
\subjclass[2010]{28A80, 11L03, 54E52}

\begin{document}

\begin{abstract} 
Let $\alpha, \beta \in (0,1)$ such that at least one of them is irrational.  We take a random walk on the real line such that  the choice of $\alpha$ and $\beta$ has  equal probability $1/2$. We prove that almost surely the $\alpha\beta$-orbit is uniformly distributed module one, and the exponential sums along its  orbit has the square root cancellation. We also show that the exceptional set in the probability space, which does not have the property of uniform distribution modulo one, is large in the terms of topology and  Hausdorff dimension.
\end{abstract}

\maketitle

\section{Introductions}

\subsection{Motivation and previous results}
 Let  $\alpha, \beta \in \T$ where $\T$ is the circle group $\R/\Z$. A non-empty closed set $K\subseteq \T$  is called an $\alpha\beta$-set if 
\begin{equation*}
K\subset (K-\alpha)\cup (K-\beta),
\end{equation*}
or equivalently, if $x\in K$ then $x+\alpha\in K$ or $x+\beta\in K$.  Moreover a sequence $\xi_n, n\in \N$ is called an $\alpha\beta$-orbit if 
\begin{equation*}
\xi_{n+1}-\xi_n\in \{\alpha, \beta\},
\end{equation*}
where  $\xi_0=0$. We remark that  the above sums and differences are taking in the space $\T$, and thus the values are  always taken   modulo  one.  

 We first show  some known results in the following.  Engelking \cite{E} and Katznelson \cite{K} initially studied the $\alpha\beta$-sets and $\alpha\beta$-orbits. Katznelson \cite{K} proved that if   $\alpha, \beta\in \T$ are rational independent ($\text{mod}\,1$), then there exists  a non-where dense $\alpha\beta$-set; furthermore for some $\alpha, \beta$, there exits $\alpha\beta$-set with Hausdorff dimension zero.   For Hausdorff dimension (see Subsection~\ref{subsec:H} below) and the box counting dimension, we refer to Falconer \cite{Falconer},  Mattila \cite{Mattila1995}.

Recently Feng and Xiong \cite{FX}, motivated from the problem of ``affine embeddings of Cantor sets",  proved that if $\alpha, \beta\in \T$  are rational independent ($\text{mod}\,1$), then for any $\alpha\beta$-set $K$,  either $K-K=\T$ or $K$ has non-empty interior. It follows that 
\begin{equation*}
\underline{\dim}_B K\ge 1/2.
\end{equation*}
We remark that Feng and Xiong \cite{FX} also considered the general case which contains more parameters than just $\alpha, \beta$, and they made applications to ``affine embeddings of Cantor sets".

Yu \cite{Yu} showed some recurrent conditions such that  the $\alpha\beta$-orbit is dense $\T$.  Moreover Yu \cite{Yu} also considered the general case (more parameters) and improved  a result of Feng and Xiong \cite{FX} for the case when the parameters  are rationally independent algebraic numbers.

In this project we mainly consider the $\alpha\beta$-orbits. We remark that the $\alpha\beta$-sets are closely related to the $\alpha\beta$-orbits. For instance it is not hard to see that 
any $\alpha\beta$-orbit is an $\alpha\beta$-set.

\subsection{Set-up and main results}

We first show some notation for our purpose. Throughout the paper, let $\alpha, \beta\in (0,1)$ such that  at least one of them  is irrational.  Let  
$$
\Sigma=\{(x_1, x_2, \ldots, x_n, \ldots ): x_n \in \{1, 2\}\}.
$$
For $\vx=(x_1, x_2, \ldots )\in \Sigma$ and each $n\in \N$ define
$$
S_n(\vx)=\sum_{i=1}^{n} X_i(\vx) 
$$
where
\begin{equation*}
X_i(\vx) =
  \begin{cases}
  \alpha  & x_i=1, \\
  \beta & x_i=2.
  \end{cases}
\end{equation*}

It is convenient to consider $\Sigma$ as   probability space, and $X_i$ with $i \in \N$ is a sequence of random variables.   For each $i$  the random variable $X_i$ takes value $\alpha$ or $\beta$ with the equal probability $1/2$. This gives us a probability measure on $\Sigma$, and denote it  by $\PP$.

We will show that for $\PP$ almost all $\vx\in \Sigma$ the sequence  $S_n(\vx), n\in \N$ is uniform distributed module one.  
Recalling that  a sequence of real numbers  $\xi_n, n \in \N$ is called uniformly distributed module one (u.d. mod $1$) if for any interval $I\subseteq [0,1)$ one has 
\begin{equation*}
\frac{\# \{1\le n\le N: \{\xi_n\}\in I\}}{N}\rightarrow |I| \quad \text{as} \quad N \rightarrow \infty.
\end{equation*}
Here $\{x\}$ is the fractional part of $x$, the notation $\# S$ means the cardinality of the set $S$, and $|I|$ stands for the length of the interval $I$.  See Drmota and Tichy \cite{DT} and Harman \cite[Chapter 5]{Harman} for more details.

We remark that if $\alpha=\beta$ and $\alpha$ is irrational, then it is known over a century that  the sequence $n\alpha, n\in \N$, also known as irrational rotation in dynamical system, is uniformly distributed modulo one. We refer to \cite[Chapter 1]{DT} for more details.

Clearly, if both of $\alpha$ and $\beta$ are rational, then the $\alpha\beta$-orbits (modulo one) are only finite points in $\mathbb{T}$, and hence never being u.d. mod $1$. Excluding this trivial case  we obtain the following.

\begin{theorem}
\label{thm:uniform} For $\PP$ almost all $\vx\in \Sigma$ the sequence  $S_n(\vx), n\in \N$ is uniform distributed  module one.
\end{theorem}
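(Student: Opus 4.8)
The plan is to verify Weyl's criterion almost surely. By Weyl's criterion (see \cite{DT}), the sequence $S_n(\vx)$ is u.d. mod $1$ precisely when for every nonzero $h\in\Z$ one has $\frac1N\sum_{n=1}^N \e(hS_n(\vx))\to 0$, where $\e(t)=e^{2\pi i t}$. Since $\Z\setminus\{0\}$ is countable and a countable intersection of full-measure events is again of full measure, it suffices to fix a nonzero integer $h$ and prove that $\frac1N T_N(\vx)\to 0$ almost surely, where $T_N(\vx)=\sum_{n=1}^N \e(hS_n(\vx))$. First I would exploit the multiplicative structure of the character: setting $Y_i=\e(hX_i)$, the $Y_i$ are i.i.d.\ random variables taking the values $\e(h\alpha)$ and $\e(h\beta)$ with equal probability, $\e(hS_n)=\prod_{i=1}^n Y_i$, and I write $\mu=\EE[Y_1]=\tfrac12\bigl(\e(h\alpha)+\e(h\beta)\bigr)$.

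Assume first that $|\mu|<1$. Because $|Y_i|=1$ and the $Y_i$ are independent, for $m\le n$ we get $\EE\bigl[\e(hS_n)\overline{\e(hS_m)}\bigr]=\mu^{\,n-m}$, and summing over the square $1\le m,n\le N$ yields
\[
\EE\bigl[|T_N|^2\bigr]=N+2\,\mathrm{Re}\sum_{k=1}^{N-1}(N-k)\mu^{k}=O(N),
\]
the final bound coming from $\sum_{k\ge 1}|\mu|^k<\infty$. Thus $\EE\bigl[|T_N/N|^2\bigr]=O(1/N)$, which already gives convergence in probability; the hard part is to upgrade this to almost sure convergence. To do so I would pass to the subsequence $N_j=j^2$, along which $\sum_j \EE\bigl[|T_{N_j}/N_j|^2\bigr]=O\bigl(\sum_j j^{-2}\bigr)<\infty$, so by monotone convergence $T_{N_j}/N_j\to 0$ almost surely. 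For $N_j\le N<N_{j+1}$ the trivial bound $|T_N-T_{N_j}|\le N_{j+1}-N_j=O(\sqrt{N_j})$ together with $N\ge N_j$ shows $(T_N-T_{N_j})/N\to 0$, and since $N_j/N\to 1$ I conclude $T_N/N\to 0$ almost surely.

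It remains to deal with the degenerate case $|\mu|=1$, which occurs exactly when $\e(h\alpha)=\e(h\beta)$, i.e.\ $h(\alpha-\beta)\in\Z$. If $\alpha-\beta$ is irrational this never happens for $h\ne 0$, so the two preceding steps already cover every $h$. If instead $\alpha-\beta=p/q\in\Q$, then the hypothesis that at least one of $\alpha,\beta$ is irrational forces both to be irrational, and $|\mu|=1$ only when $q\mid h$; for such $h$ the variable $Y_1$ is constant, so $\e(hS_n)=\e(nh\beta)$ deterministically, and since $h\beta\notin\Z$ the geometric sum gives $\frac1N\sum_{n\le N}\e(nh\beta)=O(1/N)\to 0$ for every $\vx$. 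Combining the cases, for each nonzero $h$ we have $\frac1N T_N\to 0$ almost surely, and intersecting over $h\in\Z\setminus\{0\}$ proves the theorem. I expect the genuine obstacle to be precisely the passage from the second-moment (in-probability) estimate to the almost sure statement, which hinges on the geometric decay $|\mu|<1$ supplied by the irrationality hypothesis, and on isolating the resonant exponents $h$ with $h(\alpha-\beta)\in\Z$.
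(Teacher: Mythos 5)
Your proposal is correct and follows essentially the same route as the paper: Weyl's criterion, a second-moment bound $\EE|T_N|^2=O(N)$ obtained from independence and the geometric decay of $\mu^{n-m}$ with the same dichotomy $|\mu|<1$ versus the resonant case $\e(h\alpha)=\e(h\beta)$ (which the paper likewise resolves using irrationality of $\alpha$ to bound the resulting deterministic geometric sum), followed by the subsequence $N_j=j^2$ (the paper's $N_i=i^\rho$ with $\rho>1$), Borel--Cantelli/monotone convergence, interpolation between subsequence points, and a countable intersection over $h$. No gaps.
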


We note that in order to prove Theorem~\ref{thm:uniform}, we  show that for almost all $\vx\in \Sigma$ and any non-zero $h\in \Z$ one has 
\begin{equation}
\label{eq:zero}
\lim_{N\rightarrow \infty}\frac{1}{N}\sum_{n=1}^{N}\e(hS_n(\vx))=0,
\end{equation}
where  throughout the paper
we denote $\e(x)=\exp(2 \pi ix)$.
Then by using Weyl criterion (see below Lemma~\ref{lem:Weyl}) we obtain the claim of Theorem~\ref{thm:uniform}.  Indeed by adapting  the  argument of the first author and Shparlinski~\cite{ChSh}, for any fix $h\in \Z\setminus\{0\}$ we improve the order of the convergence of~\eqref{eq:zero}.

\begin{theorem}
\label{thm:exponentialsums} Let $h\in \Z\setminus\{0\}$. Then 
for $\PP$ almost all $\vx\in \Sigma$ and  for any $\varepsilon>0$ we have  
$$
\sum_{n=1}^N \e(hS_n( \vx))=O_{\alpha, \beta, \varepsilon}(N^{1/2} (\log N)^{3/2+\varepsilon}).
$$
\end{theorem}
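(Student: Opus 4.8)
The plan is to exploit the multiplicative structure of the summands. Writing $Y_i=\e(hX_i(\vx))$, the independence of the coordinates makes $Y_1,Y_2,\dots$ independent random variables on the unit circle, each taking the values $\e(h\alpha)$ and $\e(h\beta)$ with probability $1/2$, and
$$
\e(hS_n(\vx))=\prod_{i=1}^{n}Y_i=:P_n .
$$
Set $\mu=\EE Y_i=\tfrac12(\e(h\alpha)+\e(h\beta))$, so that $|\mu|=|\cos(\pi h(\alpha-\beta))|\le 1$. I would separate two cases according to whether $|\mu|<1$ or $|\mu|=1$. The equality $|\mu|=1$ forces $\e(h\alpha)=\e(h\beta)$, i.e.\ $h(\alpha-\beta)\in\Z$; then each $Y_i$ equals the constant $\e(h\alpha)$, the hypothesis that one of $\alpha,\beta$ is irrational forces $\alpha$ itself to be irrational, and $P_n=\e(hn\alpha)$ becomes a deterministic geometric progression whose partial sums are bounded by $|\e(h\alpha)-1|^{-1}=O_{\alpha,h}(1)$, far stronger than required. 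So the real content is the generic case $|\mu|<1$.

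\emph{Second moment.} Because $|Y_i|=1$, for $m\le n$ the telescoping $P_m\overline{P_n}=\prod_{j=m+1}^{n}\overline{Y_j}$ together with independence gives $\EE[P_m\overline{P_n}]=\overline{\mu}^{\,n-m}$, which depends only on $n-m$. Summing over a block $I=\{M+1,\dots,M+H\}$ I would obtain, uniformly in the offset $M$,
$$
\EE\Bigl|\sum_{n\in I}P_n\Bigr|^2
=H+2\,\mathrm{Re}\sum_{d=1}^{H-1}(H-d)\overline{\mu}^{\,d}
\le H\Bigl(1+\frac{2|\mu|}{1-|\mu|}\Bigr)=:BH,
$$
with $B=\frac{1+|\mu|}{1-|\mu|}<\infty$ depending only on $\alpha,\beta,h$. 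The uniformity in $M$ is the key point, and it is exactly what $|Y_i|=1$ buys us.

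\emph{Completion.} From the block bound I would pass to a maximal bound by the Rademacher--Menshov device. Fix a scale $J$ with $2^{J-1}<N\le 2^J$, and for $0\le j\le J$ partition $\{1,\dots,2^J\}$ into the dyadic blocks $I_{j,k}=\{(k-1)2^{j}+1,\dots,k2^{j}\}$, $1\le k\le 2^{J-j}$, writing $U_{j,k}=\sum_{n\in I_{j,k}}P_n$. Every initial sum $T_N=\sum_{n\le N}P_n$ is a sum of at most $J+1$ such blocks (one per scale, from the binary expansion of $N$), so Cauchy--Schwarz gives $\max_{N\le 2^J}|T_N|^2\le (J+1)\sum_{j=0}^{J}\max_k|U_{j,k}|^2$. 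Taking expectations and bounding $\max_k|U_{j,k}|^2\le\sum_k|U_{j,k}|^2$ together with the block estimate yields
$$
\EE\Bigl[\max_{N\le 2^J}|T_N|^2\Bigr]\le (J+1)\sum_{j=0}^{J}\sum_{k=1}^{2^{J-j}}\EE|U_{j,k}|^2\le B\,2^{J}(J+1)^2 .
$$

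\emph{Borel--Cantelli.} Finally I would apply Chebyshev's inequality with the threshold $\lambda_J=2^{J/2}(J+1)^{3/2+\varepsilon}$, giving $\PP\bigl(\max_{N\le 2^J}|T_N|>\lambda_J\bigr)\le B(J+1)^{-1-2\varepsilon}$, which is summable in $J$. By the Borel--Cantelli lemma, almost surely $\max_{N\le 2^J}|T_N|\le\lambda_J$ for all large $J$; choosing $J$ with $2^{J-1}<N\le 2^J$ and recalling $J\asymp\log N$ converts this into $T_N=O_{\alpha,\beta,\varepsilon}(N^{1/2}(\log N)^{3/2+\varepsilon})$, and intersecting the full-measure sets over a sequence $\varepsilon\downarrow 0$ finishes the argument. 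I expect the main obstacle to be the completion step: the exponent $3/2$ is produced by the interaction of two logarithmic losses --- one factor $(J+1)$ from Cauchy--Schwarz across scales and one from summing $\EE|U_{j,k}|^2$ within a scale --- and the bookkeeping of the thresholds must be arranged precisely so that the Borel--Cantelli series converges while the surviving power of $\log N$ is no larger than $3/2+\varepsilon$.
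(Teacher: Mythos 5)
Your proof is correct, but it reaches the bound by a genuinely different device than the paper. The paper controls all partial sums $M\le N$ at once via a Fourier-analytic completion trick (Lemma~\ref{lem:completion}, borrowed from \cite{ChSh}): the prefix sum is majorized by $U_{N,h}(\vx)=\sum_{|j|\le N}\tfrac{1}{|j|+1}\bigl|\sum_{n\le N}\e(hS_n(\vx))\e(jn/N)\bigr|$, whose second moment is shown to be $\ll N(\log N)^2$ (one logarithm from Cauchy--Schwarz over the frequencies $j$, one from the harmonic weights), after which Chebyshev and Borel--Cantelli along $N_i=2^i$ give the theorem. You instead run a Rademacher--Menshov chaining argument: the uniform block estimate $\EE\bigl|\sum_{n\in I}P_n\bigr|^2\le B|I|$ (valid because $\EE[P_m\overline{P_n}]=\overline{\mu}^{\,n-m}$ depends only on $n-m$), the binary decomposition of each prefix into at most $J+1$ aligned dyadic blocks, and Cauchy--Schwarz across scales give $\EE[\max_{N\le 2^J}|T_N|^2]\ll 2^J(J+1)^2$ --- the same two logarithmic losses, now one from the number of scales and one from summing blocks within a scale --- and the Chebyshev/Borel--Cantelli endgame is identical. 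Your route is self-contained and purely probabilistic; in particular it avoids the paper's Lemma~\ref{lem:U}, which asserts $\EE\bigl|\sum_{n\le N}\e(hS_n(\vx))\e(jn/N)\bigr|^2\ll N$ uniformly in $j$ and whose sketched proof needs separate care in the degenerate case $\e(h\alpha)=\e(h\beta)$ (there the twisted sum is a deterministic geometric series in $\e(h\alpha+j/N)$ which can have modulus comparable to $N$ for $j$ near $-hN\alpha$); you dispose of that case at the outset by noting the untwisted sum is then a bounded deterministic geometric progression. The handling of the quantifier over $\varepsilon$ by intersecting full-measure sets along $\varepsilon_q\downarrow 0$ and using monotonicity matches the paper's closing step.
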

We conjecture  that  this upper bound is the best possible except for a logarithm factor. Moreover we do not know how to deal with the following question. Is it true that for almost all $\vx\in \Sigma$ one has that for any $h\in \Z\setminus\{0\}$ and any $\varepsilon>0$,
$$
\sum_{n=1}^N \e(hS_n( \vx))=O_{\alpha, \beta, \varepsilon}(N^{1/2} (\log N)^{3/2+\varepsilon})\,?
$$

We now turn to the study  of the size of the exceptional set. To be precise let
$$
E_{\alpha, \beta}=\{\vx\in \Sigma: S_n(\vx), n\in \N, \text{ is not u.d. mod 1}\}.
$$

We give the ordinary metric on the space $\Sigma$ in the following.
Let  $\vx=(x_1,\ldots), \vy=(y_1, \ldots)\in \Sigma$. If  $x_1=y_1$ then  define  
\begin{equation*}
d(\vx, \vy)=2^{-k(\vx, \vy)},
\end{equation*}
where
\begin{equation*}
k(\vx, \vy)=\max\{n: x_j=y_j, j\le n\}.
\end{equation*}
On the other hand  if $x_1\neq y_1$ then let $d(\vx, \vy)=1$. Specially let $d(\vx, \vx)=0$ for any $\vx\in \Sigma$. 
Clearly the function $d(\vx, \vy)$ is a metric on the space $\Sigma$.

Recalling that a $\G_{\delta}$  set is a subset of a topological space that is a countable intersection of open sets.

\begin{theorem}
\label{thm:topology} The set $E_{\alpha, \beta}$ contains a dense $\G_\delta$ set in $\Sigma$.
\end{theorem}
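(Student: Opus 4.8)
The plan is a Baire category argument: although a $\PP$-random $\vx$ is u.d.\ by Theorem~\ref{thm:uniform}, a topologically generic $\vx$ will \emph{oversample} a fixed interval, and I will show that these bad $\vx$ form a dense $\G_\delta$ set. Write $\|t\|$ for the distance from $t\in\R$ to the nearest integer. By Weyl's criterion (Lemma~\ref{lem:Weyl}), if $S_n(\vx)$ is u.d.\ mod $1$ then so is $h\,S_n(\vx)$ for every fixed $h\in\Z\setminus\{0\}$; hence it suffices to fix one $h$ and force $h\,S_n(\vx)\bmod 1$ to be badly distributed. Now $h\,S_n(\vx)\bmod 1$ is itself an orbit with the two increments $\theta:=\|h\alpha\|$ and $\phi:=\|h\beta\|$ (up to sign), so the problem reduces to oversampling for a pair of increments that I may prescribe through the choice of $h$. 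Concretely, put $C_N(\vx)=\#\{1\le n\le N:\{h\,S_n(\vx)\}\in[0,\tfrac12)\}$, fix $c_0\in(\tfrac12,1)$ close to $\tfrac12$, and set
\[
G=\bigcap_{m\ge 1}U_m,\qquad U_m=\bigcup_{N\ge m}\bigl\{\vx\in\Sigma:\ C_N(\vx)/N>c_0\bigr\}.
\]
Each $C_N$ depends only on $x_1,\dots,x_N$, so $\{C_N/N>c_0\}$ is a union of cylinders, whence $U_m$ is open and $G$ is $\G_\delta$. If $\vx\in G$ then $C_N(\vx)/N>c_0>\tfrac12$ for infinitely many $N$, so $h\,S_n(\vx)$, and hence $S_n(\vx)$, is not u.d.\ mod $1$; thus $G\subseteq E_{\alpha,\beta}$. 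Since $(\Sigma,d)$ is compact, hence complete, the Baire category theorem reduces everything to showing that each $U_m$ is dense, i.e.\ that every cylinder $[a_1,\dots,a_k]$ admits an extension with $C_N/N>c_0$ for some $N\ge m$.

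The key is to choose $h$ so that the two increments are strongly unbalanced. Relabelling the symbols if necessary, assume $\alpha$ is irrational; then $\{h\alpha\bmod 1\}$ is dense, so $\theta=\|h\alpha\|$ can be made arbitrarily small. Passing to such $h$, the pairs $(h\alpha,h\beta)\bmod 1$ accumulate in the closed subgroup $\overline{\{(h\alpha,h\beta):h\in\Z\}}\subseteq\T^2$, and I select $h$ with $\theta$ tiny while $\phi=\|h\beta\|$ sits in a controlled position. If $\alpha,\beta$ are rationally independent the subgroup is all of $\T^2$, and $\phi$ may be placed close to any prescribed value in $(0,\tfrac12]$; if they are dependent the subgroup is a closed line, whose intersection with $\{h\alpha\approx 0\}$ forces $\phi$ close to a nonzero multiple of $1/q$ (with $q\ge 2$ the relevant denominator), or, in the degenerate case, close to $0$ but in a fixed ratio to $\theta$.

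With such an $h$ fixed I extend a given prefix by the greedy oversampling rule for the increments $(\theta,\phi)$: from the current value step by the \emph{smaller} increment while in $[0,\tfrac12)$ and by the larger one while in $[\tfrac12,1)$. Because the in-target increment is tiny, each visit to $[0,\tfrac12)$ produces a long ``sweep'' of order $1/\theta$ consecutive points landing in $[0,\tfrac12)$; because $\phi$ was placed so that a bounded number of out-of-target steps wraps back near $0$, each ``return'' across $[\tfrac12,1)$ costs only $O(1)$ points (and in the degenerate case the return itself creeps and again lands near $0$, the sweep and return lengths being in the fixed ratio $\phi/\theta$). Hence over a block of $T$ appended steps the proportion of points in $[0,\tfrac12)$ equals $r/(1+r)>\tfrac12$, where $r>1$ is the ratio of the larger increment to the smaller; this is $1-O(\theta)$ in the unbalanced case and a fixed number exceeding $\tfrac12$ in the degenerate case, so in either case it exceeds $c_0$. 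Taking $T$ large makes $N=k+T\ge m$ and renders the fixed prefix negligible, so the extended point lies in $U_m$. This gives density of every $U_m$, and with it the theorem.

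The main obstacle is the interlocking Diophantine and dynamical control. On the number-theoretic side one must genuinely justify the selection of $h$: that $\|h\alpha\|$ can be made small while $\|h\beta\|$ is placed so that boundedly many large steps return the orbit close to $0$ — a Kronecker/closed-subgroup statement in $\T^2$ that must be handled separately in the independent, dependent, and degenerate cases (and with the roles of $\alpha,\beta$ possibly exchanged). On the dynamical side the greedy rule is only a piecewise rotation: since the two increments are unequal its branches do not reassemble into a measure-preserving bijection, so there is no a priori invariant measure and the occupation frequency could depend on the starting value or fail to stabilise. The crux is therefore a \emph{uniform} lower bound on the visit frequency to $[0,\tfrac12)$ from every starting value; I would obtain it by the renewal (excursion) decomposition above together with the three–distance theorem to bound the return times, ensuring that the re-entry points stay away from the right endpoint $\tfrac12$ so that the sweeps retain full length $\asymp 1/\theta$.
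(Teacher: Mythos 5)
Your overall Baire-category frame (a $\G_\delta$ of the form $\bigcap_m U_m$ with each $U_m$ open because $C_N$ depends only on finitely many coordinates, reduced to density of each $U_m$ by extending an arbitrary prefix) is the same as the paper's, but the mechanism you use to violate equidistribution is genuinely different, and as written it has a gap that is not merely technical. The paper does not oversample a half-interval via a chosen frequency $h$; it \emph{undersamples} a short interval. Fixing $0<\tau<\min\{\alpha,\beta,|\beta-\alpha|\}$, from any current position outside $(0,\tau)$ at least one of the two increments $\alpha,\beta$ lands outside $(0,\tau)$ again (if both landed inside, $\alpha-\beta$ would be within $\tau$ of an integer, contradicting the choice of $\tau$). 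Hence any prefix $\vu\in\Sigma_n$ extends by a word of length $n^2$ along which the orbit never enters $(0,\tau)$, forcing the visit frequency to $(0,\tau)$ down to at most $n/(n+n^2)\to 0<\tau$ along a subsequence. This needs no Fourier analysis, no choice of $h$, and treats the rational/irrational and dependent/independent cases uniformly.

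The gap in your version: everything hinges on selecting $h$ so that the two increments of $hS_n(\vx)$ are ``strongly unbalanced'' with ratio $r$ bounded away from $1$, and on the greedy piecewise rotation achieving occupation frequency $r/(1+r)>c_0>\tfrac12$ of $[0,\tfrac12)$ from every starting value. Neither is established, and the first genuinely fails in a concrete case: take $\beta=1-\alpha$ with $\alpha$ irrational. Then $h\beta\equiv-h\alpha\pmod 1$ for every $h$, so $\|h\alpha\|=\|h\beta\|$ identically and no choice of $h$ yields $r>1$. Worse, with signed increments $+\theta$ and $-\theta$ (where $\theta=\|h\alpha\|$ is small), your greedy rule sweeps up through $[0,\tfrac12)$ once and then oscillates across the endpoint $\tfrac12$ one step in, one step out, so $C_N(\vx)/N\to\tfrac12$ and never exceeds $c_0$; density of $U_m$ is not obtained. (The theorem is of course still true here --- one can oscillate between two points and oversample a tiny interval near $0$ --- but that is a different construction, and it is essentially the paper's undersampling idea in disguise.) In addition, the dynamical claims you defer (uniform visit frequency via a renewal decomposition plus the three-distance theorem, re-entry points staying away from $\tfrac12$ so that sweeps retain full length) are exactly the crux and are left unproved, so even in the rationally independent case the argument is a program rather than a proof.
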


Theorem \ref{thm:uniform} implies that the set $E_{\alpha, \beta}$ has zero measure. Therefore it is natural to ask what is the Hausdorff dimension of the set $E_{\alpha, \beta}$. We note that the metric space $(\Sigma, d)$ has Hausdorff dimension one. The following result shows that the exceptional set $E_{\alpha, \beta}$ is large in the sense of Hausdorff dimension.

\begin{theorem}
\label{thm:dimension} The set $ E_{\alpha, \beta}$ has Hausdorff dimension one.
\end{theorem}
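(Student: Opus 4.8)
I would prove only the nontrivial inequality $\dim_H E_{\alpha,\beta}\ge 1$; the reverse is immediate, since $E_{\alpha,\beta}\subseteq\Sigma$ and the text already records that $(\Sigma,d)$ has Hausdorff dimension one. The strategy is to produce, for every small $\varepsilon>0$, a subset $F_\varepsilon\subseteq E_{\alpha,\beta}$ with $\dim_H F_\varepsilon\ge 1-c\,\varepsilon$ for an absolute constant $c$, and then let $\varepsilon\to0$. A point $\vx$ is placed in $F_\varepsilon$ by forcing its orbit to \emph{avoid} a fixed short arc $G\subseteq\T$. Since a u.d.\ sequence is dense in $[0,1)$, any sequence omitting an arc is automatically not u.d.\ mod $1$; hence $F_\varepsilon\subseteq E_{\alpha,\beta}$ for free, and the entire problem becomes showing that avoiding an arc is cheap enough that the admissible $\vx$ still fill almost full dimension.

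\textbf{Construction.} I may assume $\alpha\ne\beta$ (if $\alpha=\beta$ is irrational the orbit is a single rotation, always u.d., and the statement is vacuous), and I set $\gamma=\alpha-\beta$. Fix $\varepsilon$ smaller than the $\T$-distance between $\alpha$ and $\beta$, put $G=(c,c+\varepsilon)$, and let $F_\varepsilon=\{\vx\in\Sigma:\{S_n(\vx)\}\notin G\ \text{for all }n\}$. The first observation is that avoidance can always be continued: from any state $x\notin G$ the two successors $x+\alpha$ and $x+\beta$ are at $\T$-distance equal to that of $\alpha,\beta$, which exceeds $\varepsilon$, so they cannot both lie in the length-$\varepsilon$ arc $G$. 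Thus at least one admissible move always exists, $F_\varepsilon$ is nonempty, and it carries a rich tree of choices. A step is \emph{forced} exactly when one successor falls in $G$, i.e.\ when $x$ lies in the warning set $W=(G-\alpha)\cup(G-\beta)$ of Lebesgue measure at most $2\varepsilon$; at every other state both moves are admissible and the step is \emph{free}. On this tree I would place the natural branching measure $\mu$: at a free state choose each successor with probability $1/2$, at a forced state take the unique admissible one.

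\textbf{Dimension estimate.} For $\mu$-almost every $\vx$ the length-$n$ cylinder has mass $2^{-F_n(\vx)}$, where $F_n(\vx)$ counts the free steps among the first $n$; as such cylinders have diameter $2^{-n}$, the mass distribution principle yields
\[
\dim_H F_\varepsilon\ \ge\ \liminf_{n\to\infty}\frac{F_n(\vx)}{n}\ =\ 1-\limsup_{n\to\infty}\frac{1}{n}\#\{\,j\le n:\{S_j(\vx)\}\in W\,\}
\]
for $\mu$-a.e.\ $\vx$. Everything therefore reduces to the single quantitative fact that, under $\mu$, the orbit visits the warning set $W$ with frequency $O(\varepsilon)$. \emph{This frequency bound is the step I expect to be the main obstacle.}

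\textbf{The obstacle.} Equivalently, if $A_n$ denotes the number of admissible words of length $n$, then $A_n=2^n\,\PP(\{S_j(\vx)\}\notin G,\ j\le n)$ and $\dim_H F_\varepsilon=\log_2\lambda_\varepsilon$ with $\lambda_\varepsilon=\lim_n A_n^{1/n}$, so the needed estimate is precisely that $\lambda_\varepsilon\to 2$ as $\varepsilon\to0$. The difficulty is to control the constrained walk on $\T$ with steps $\alpha,\beta$ that is killed upon entering $G$, and in particular to rule out that admissible orbits linger near $W$. The cleanest route I would pursue is to analyse the sub-Markov transfer operator $L_\varepsilon f(x)=\mathbf 1_{\T\setminus G}(x)\bigl(f(x-\alpha)+f(x-\beta)\bigr)$, whose leading eigenvalue is $\lambda_\varepsilon$: the unperturbed operator ($\varepsilon=0$) has leading eigenvalue $2$ with constant eigenfunction, so a perturbation or Doeblin-type argument should give $\lambda_\varepsilon\ge 2-O(\varepsilon)$ and, at the same time, a uniformly bounded stationary density $\rho_\varepsilon$ for the normalized process, whence the Birkhoff frequency of $W$ is at most $\|\rho_\varepsilon\|_\infty\,|W|=O(\varepsilon)$. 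Either formulation gives $\dim_H F_\varepsilon\ge 1-c\,\varepsilon$, and letting $\varepsilon\to0$ produces $\dim_H E_{\alpha,\beta}\ge 1$, completing the proof. Thus the technical heart is an equidistribution-with-bounded-density statement for the gap-avoiding walk, while the surrounding parts (admissibility, the inclusion $F_\varepsilon\subseteq E_{\alpha,\beta}$, and the mass distribution bound) are routine.
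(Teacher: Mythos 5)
Your overall strategy differs from the paper's, and it contains a genuine gap at exactly the point you flag as ``the main obstacle'': nothing in the proposal actually establishes that the constrained walk visits the warning set $W$ with frequency $O(\varepsilon)$ (equivalently, that $\lambda_\varepsilon\ge 2-O(\varepsilon)$ together with a bounded stationary density). This is not a routine perturbation statement. The unperturbed operator $L_0f=f(\cdot-\alpha)+f(\cdot-\beta)$ has eigenfunctions $\e(hx)$ with eigenvalues $\e(-h\alpha)+\e(-h\beta)$, and by simultaneous Diophantine approximation there are infinitely many $h$ with $\|h\alpha\|$ and $\|h\beta\|$ both small, so these eigenvalues accumulate at $2$: there is no spectral gap, and standard analytic perturbation of the leading eigenvalue does not apply. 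Any quantitative escape-rate bound for the hole $G$ would at the very least seem to require Diophantine hypotheses on $(\alpha,\beta)$, which the theorem does not assume. A secondary (fixable, but also unaddressed) issue is that even granting $\lambda_\varepsilon\to2$, the identity $\dim_H F_\varepsilon=\log_2\lambda_\varepsilon$ and the mass-distribution step need uniform control of cylinder masses, i.e.\ of the number of forced steps in \emph{every} admissible prefix, not just a growth rate for the counting sequence $A_n$.

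The paper avoids this entire difficulty by not asking the orbit to avoid a fixed arc forever. Instead (Lemma~\ref{lem:jumper2}), for each prefix $\vu\in\Sigma_n$ it uses the pigeonhole principle to select an interval $I=I(\vu)\in\I$ that the first $n$ points visit at most $n\tau$ times, and then appends a single forced block of length $\floor{\varepsilon n}$ during which $I$ is avoided; this drives the visit frequency of $I$ down to $\tau/(1+\varepsilon/2)<|I|$ at time $n+\floor{\varepsilon n}$. Doing this only along a sparse sequence $n_k$ leaves all $2^{n_{k+1}-\widetilde n_k}$ continuations free between blocks, so the resulting Cantor set has dimension exactly $1/(1+\varepsilon)$ by an elementary computation (Corollary~\ref{cor:dimension}), and a second pigeonhole argument fixes one interval $I(\vx)$ that is under-visited along a subsequence, which already kills uniform distribution via a $\liminf$. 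In short: the paper trades your ``avoid a fixed arc at every step'' (hard, escape-rate problem) for ``avoid a prefix-dependent, already-rarely-visited arc on sparse short blocks'' (combinatorially trivial), which is what makes the dimension bound unconditional. To salvage your route you would need to supply the equidistribution-with-bounded-density statement for the gap-avoiding walk, and I do not see how to do that with the tools in this paper.
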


\section{Preliminaries}

\subsection{Notation and conventions.}

As usual, the notations $U = O(V )$, 
$U \ll V$ and $ V\gg U$  are equivalent to $|U|\leqslant c|V| $ for some positive constant $c$. The notation $U_n=o(V_n)$ means that for any $\varepsilon>0$ one has $|U_n|\le \varepsilon |V_n|$ for all large enough $n$.
Throughout the paper, all implied constants are absolute.

\subsection{Symbolic space}
Let $\Sigma=\{1, 2\}^{\N}$ be the collection of all the infinite words over the alphabet set $\{1, 2\}$. For each $n\in \N$ and $\vx=x_1x_2\ldots \in \Sigma$  let $\vx|_n=x_1\ldots x_n$.  Let  $\Sigma_n=\{\vx|_n: \vx \in \Sigma\}$ for each $n\in \N$ and $\Sigma_{*}=\bigcup_{n=1}^{\infty} \Sigma_n$ be the collection of all the words with finite length. For any $\vx\in \Sigma \cup \Sigma_*$ the length of $\vx$ is denote d by $|\vx|$. The combination of two words $\vx\in \Sigma_*$ and $\vy\in \Sigma_* \cup \Sigma$ is denoted by $\vx\vy$. Note that we do not have the word $\vy\vx$ if $\vy\in \Sigma$.    For $\vw\in \Sigma^{*}$ denote 
$$
[\vw]=\{\vx\in \Sigma: \vx|_{|\vw|}=\vw\},
$$
and call it a cylinder set. 


\subsection{Hausdorff dimension}
\label{subsec:H}

 The  Hausdorff dimension of a set $A\subseteq \R^{d}$ is defined as 
\begin{align*}
\dim_H A=\inf \{s>0: &~\forall \,   \varepsilon>0,~\exists \, \{ U_i \}_{i=1}^{\infty}, \ U_i \subseteq \R^{d},\  \text{such that }  \\
  &  A\subseteq \bigcup_{i=1}^{\infty} U_i \text{ and } \sum_{i=1}^{\infty} \text{diam} (U_i)^{s}<\varepsilon \}.
\end{align*}

A useful way to obtain the lower bound of Hausdorff dimension is by using 
the following  {\it mass distribution
 principle\/}~\cite[Theorem~4.2]{Falconer}.

\begin{lemma} 
\label{lem:mass}
Let $X$ be a metric space.  Let $A\subset X$ and $\mu$ be a measure on $X$ such that $\mu(A)>0$. If for any ball $B(x, r)$ with $0<r\leqslant r_0$ for some $r_0>0$ we have 
$$
\mu(B(x, r))\ll r^{s},
$$
then the Hausdorff dimension of $A$ is at least $s$.
\end{lemma}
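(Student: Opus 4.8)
The plan is to establish the bound directly from the defining infimum of $\dim_H A$: I will show that \emph{no} countable cover of $A$ can drive the cover sum $\sum_i \operatorname{diam}(U_i)^s$ below a fixed positive constant, and then read off $\dim_H A \ge s$ from the definition recalled above. Throughout I treat $\mu$ as a Borel outer measure on $X$ and write $c$ for the implied constant in the hypothesis, so that $\mu(B(x,r)) \le c\, r^s$ whenever $0 < r \le r_0$. The key geometric input is that a set of small diameter is contained in a ball of comparable radius, which converts the hypothesis on balls into a bound on $\mu(U_i)$.

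First I would take an arbitrary countable cover $A \subseteq \bigcup_{i=1}^\infty U_i$ and discard the empty members, so that each remaining $U_i$ contains a point $x_i$. By the definition of diameter every $y \in U_i$ satisfies $d(x_i,y) \le \operatorname{diam}(U_i)$, whence $U_i \subseteq B(x_i, 2\operatorname{diam}(U_i))$; the harmless factor $2$ makes the containment valid for open balls as well. If in addition $\operatorname{diam}(U_i) \le r_0/2$ for every $i$, then the radius $2\operatorname{diam}(U_i)$ lies in $(0,r_0]$ and the ball hypothesis applies, giving
\begin{equation*}
\mu(U_i) \le \mu\bigl(B(x_i, 2\operatorname{diam}(U_i))\bigr) \le c\,\bigl(2\operatorname{diam}(U_i)\bigr)^s = c\,2^s\,\operatorname{diam}(U_i)^s.
\end{equation*}
Summing over $i$ and using countable subadditivity of $\mu$ together with $A \subseteq \bigcup_i U_i$, I obtain
\begin{equation*}
0 < \mu(A) \le \sum_{i=1}^\infty \mu(U_i) \le c\,2^s \sum_{i=1}^\infty \operatorname{diam}(U_i)^s ,
\end{equation*}
so that $\sum_i \operatorname{diam}(U_i)^s \ge \mu(A)/(c\,2^s) =: \kappa > 0$ for every such fine cover.

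Next I would feed this lower bound into the definition of $\dim_H A$. Fix $\varepsilon$ with $0 < \varepsilon < \min\{\kappa,\,(r_0/2)^s\}$. If some cover $\{U_i\}$ of $A$ satisfied $\sum_i \operatorname{diam}(U_i)^s < \varepsilon$, then each single term would obey $\operatorname{diam}(U_i)^s < \varepsilon < (r_0/2)^s$, forcing $\operatorname{diam}(U_i) < r_0/2$; thus the cover is fine in the sense of the previous paragraph, and the estimate there yields $\sum_i \operatorname{diam}(U_i)^s \ge \kappa > \varepsilon$, a contradiction. Hence for this $\varepsilon$ no admissible cover exists, so the exponent $s$ fails the condition defining membership in the infimum set. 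Because that set of admissible exponents is upward closed — on a cover of diameters at most one, raising the exponent only decreases each $\operatorname{diam}(U_i)^{s'}$, so any exponent exceeding $\dim_H A$ belongs to it — the failure at $s$ forces $s \le \dim_H A$, which is the assertion.

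The only genuinely delicate point is the final bookkeeping: one must guarantee that the diameter restriction needed to invoke the ball bound is automatically enforced by the smallness of the cover sum, which is precisely why the threshold $\varepsilon < (r_0/2)^s$ is built into the argument. The remaining ingredients — enclosing each covering set in a ball, countable subadditivity, and the choice $x_i \in U_i$ — are routine; the only technical caveat is to interpret $\mu$ as an outer measure (or to restrict attention to Borel covers, which is harmless) so that $\mu(U_i)$ is always defined, and to note that degenerate covering sets (singletons, of diameter and $\mu$-measure zero) contribute nothing to either side.
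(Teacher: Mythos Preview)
The paper does not prove this lemma at all; it simply quotes it as the mass distribution principle and cites \cite[Theorem~4.2]{Falconer}. Your argument is the standard proof and is correct: the containment $U_i\subseteq B(x_i,2\operatorname{diam}(U_i))$ converts the ball hypothesis into $\mu(U_i)\ll\operatorname{diam}(U_i)^s$, subadditivity gives the uniform lower bound $\kappa$ on the $s$-sum of any fine cover, and your choice $\varepsilon<(r_0/2)^s$ neatly forces every candidate cover to be fine. The only remark is that the ``upward closed'' step, while correct, could be stated more directly: since no cover achieves $s$-sum below $\kappa$, the $s$-dimensional Hausdorff (outer) measure of $A$ is at least $\kappa>0$, and it is a basic fact that positive $s$-measure implies $\dim_H A\ge s$.
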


\subsection{A class of Cantor sets}
\label{sec:Cantor}

For proving Theorem \ref{thm:dimension} we construct a Cantor set which is a subset of $E_{\alpha \beta}$, and then apply Lemma \ref{lem:mass} to obtain the lower bound of the dimension of this Cantor set. 
 
We first show some parameters.  
Fix $0<\varepsilon<1$. Let $n_k$ be a sequence positive integers  such that $n_{k+1}>2n_{k}$ for each $k\in \N$. Denote 
\begin{equation}
\label{eq:nk}
\widetilde{n}_k=n_k+\floor{\varepsilon n_k}.
\end{equation}
Let $q_1=2^{n_1}$ and for each $k\in \N$ denote 
$$
q_{k+1}=2^{n_{k+1}-\widetilde{n}_k}.
$$
 
Now we show the construction of a class of  Cantor set in $\Sigma$. The construction is motivated from Lemma \ref{lem:jumper2} below. 

For any $\vu\in \Sigma_n$ we pick $\vv(\vu)\in \Sigma_{\floor{\varepsilon n}}$ and denote 
\begin{equation*}
B_n=\bigcup_{\vu\in \Sigma_n}[\vu \vv(\vu)].
\end{equation*}
 
We remark that for each $\vu$ we have several different choices, but this will not affect our result.

For each $k\in \N$ let 
\begin{equation*}
E_k=\bigcap_{j=1}^{k} B_{n_j},
\end{equation*}
and 
\begin{equation*}
E=\bigcap_{k=1}^{\infty} E_k.
\end{equation*}
 
Note that $E_k\supseteq E_{k+1}$ and 
\begin{equation*}
\# E_k:=\#\{\vw\in \Sigma_{\widetilde{ n}_k}: [\vw] \subset E_k\}=\prod_{j=1}^{k}q_j.
\end{equation*}

\begin{lemma} 
Using above notation, we have 
\begin{equation*}
\dim_H E =\liminf_{k\rightarrow \infty} \frac{\log \prod_{j=1}^{k}q_j}{\widetilde{ n}_k\log 2}.
\end{equation*}
\end{lemma}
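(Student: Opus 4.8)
The plan is to compute the Hausdorff dimension of the Cantor set $E$ by establishing matching upper and lower bounds, both equal to the stated $\liminf$. The set $E$ is a decreasing intersection of the sets $E_k$, each of which is a finite union of cylinders of length $\widetilde{n}_k$. The natural strategy is to equip $E$ with the measure $\mu$ that distributes mass uniformly across the surviving cylinders at each level, and then to analyze how $\mu(B(\vx,r))$ behaves as $r\to 0$, applying the mass distribution principle (Lemma~\ref{lem:mass}) for the lower bound and a direct covering argument for the upper bound.

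First I would construct the measure $\mu$: since $\#E_k = \prod_{j=1}^{k} q_j$ cylinders of length $\widetilde{n}_k$ survive at stage $k$, I assign to each such cylinder $[\vw]$ the mass $\mu([\vw]) = \left(\prod_{j=1}^{k} q_j\right)^{-1}$. One must check that this is consistent, i.e. that the mass of a level-$k$ cylinder equals the sum of the masses of the level-$(k+1)$ cylinders it contains; this follows because each surviving word of length $\widetilde{n}_k$ extends to exactly $q_{k+1}$ surviving words of length $\widetilde{n}_{k+1}$, by the definition of $B_{n_{k+1}}$ (we freely prescribe the block $\vv(\vu)$ but the number of free choices on the intermediate coordinates between $\widetilde{n}_k$ and $n_{k+1}$ is exactly $q_{k+1}=2^{n_{k+1}-\widetilde{n}_k}$). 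With consistency verified, Kolmogorov extension gives a genuine Borel probability measure supported on $E$.

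Next I would relate the measure of a ball $B(\vx,r)$ to the measure of the cylinder of appropriate length. In the metric $d$, a ball of radius $r$ with $2^{-(m+1)} \le r < 2^{-m}$ is essentially the cylinder $[\vx|_m]$, so controlling $\mu([\vx|_m])$ as a function of $m$ controls $\mu(B(\vx,r))$. The subtlety is that $m$ need not equal one of the special lengths $\widetilde{n}_k$; for a general $m$ lying between $\widetilde{n}_k$ and $\widetilde{n}_{k+1}$, I must bound the mass of a length-$m$ cylinder, which requires counting how many surviving length-$\widetilde{n}_{k+1}$ words lie below a given surviving length-$m$ word. For the lower bound on $\dim_H E$, I would show $\mu([\vx|_m]) \le 2^{-s m}$ for every $s$ strictly below the $\liminf$ and all large $m$, which, through Lemma~\ref{lem:mass}, yields $\dim_H E \ge s$; letting $s$ approach the $\liminf$ gives the desired inequality. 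For the matching upper bound, I would cover $E_k$ directly by its $\prod_{j=1}^{k} q_j$ defining cylinders of diameter $2^{-\widetilde{n}_k}$ and compute the resulting $s$-dimensional sum, taking $k\to\infty$ along the subsequence realizing the $\liminf$.

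The step I expect to be the main obstacle is the careful estimation of $\mu([\vx|_m])$ for intermediate lengths $m$ between consecutive values $\widetilde{n}_k$, since the mass is spread unevenly depending on whether $m$ falls in the ``free'' range (where coordinates are unconstrained and each extension doubles the count) or in the ``prescribed'' range $(n_{k}, \widetilde n_{k}]$ (where the block $\vv(\vu)$ is fixed and no new branching occurs). One must track the local branching ratio across these two regimes and verify that the worst case for the covering exponent is always attained at the endpoints $m = \widetilde{n}_k$, so that the $\liminf$ over the full sequence of lengths reduces to the $\liminf$ over the subsequence $\widetilde{n}_k$ appearing in the statement. The hypothesis $n_{k+1} > 2 n_k$, which guarantees $\widetilde{n}_k < n_{k+1}$ and keeps the prescribed blocks from overlapping, is exactly what makes this regime analysis clean.
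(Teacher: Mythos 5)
Your proposal is correct and follows essentially the same route as the paper: the uniform measure on the surviving cylinders of $E_k$, a direct covering by the level-$\widetilde{n}_k$ cylinders along the subsequence realizing the $\liminf$ for the upper bound, and the mass distribution principle with a two-regime analysis (prescribed block versus free coordinates, equivalently the paper's Case 1 and Case 2 on the radius $r$) for the lower bound. The only cosmetic difference is that you build the measure by Kolmogorov extension and index the case analysis by the word length $m$ rather than by $r$, which changes nothing of substance.
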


\begin{proof}
For convenience of notation, let
\begin{equation*}
s=\liminf_{k\rightarrow \infty} \frac{\log \prod_{j=1}^{k}q_j}{\widetilde{ n}_k\log 2}.
\end{equation*}
Note that $0\le s\le 1$. This follows by~\eqref{eq:nk}, the definition of $\widetilde{ n}_k$, and  
$$
 \prod_{j=1}^{k}q_j=2^{n_1}2^{n_2-\widetilde{ n}_1}\ldots 2^{n_k-\widetilde{ n}_{k-1}}\le 2^{n_k}.
$$

Let $0<\varepsilon<t-s$ for some $\varepsilon$ and $t$. By the definition of $s$,  there exists a subsequence of $\N$, for convenience of notation we assume that this subsequence is $\N$,  such that 
\begin{equation}
\label{eq:number}
\prod_{j=1}^k q_j \le 2^{(t-\varepsilon)\widetilde{ n}_k}
\end{equation}
for all large enough  $k$. 

Observe that  for each $k$ the cylinders of $E_k$ form a cover of $E$ and each of these cylinder has diameter $2^{-\widetilde{ n}_k}$. Combining with \eqref{eq:number} we have 
\begin{equation*}
\# E_k 2^{-t\widetilde{ n}_k} \le  2^{-\varepsilon \widetilde{ n}_k}.
\end{equation*}
Thus the definition of the Hausdorff dimension implies that $\dim_H E \le t$.  By the arbitrary choice of $t>s$ we obtain that $\dim_H E\le s$.

Now we turn to the lower bound of $\dim_H E$. We assume that $0<t<s$.  Thus for all large enough $k$ we have  
\begin{equation*}
\# E_k \ge 2^{t\widetilde{n}_k}.
\end{equation*}

We first construct a measure on  $E$.  For each $k$ let $\nu_k$ be a measure on $\Sigma$ such that $\nu_k(\Sigma)=1$ and for each word $\vw\in \Sigma_{\widetilde{n}_k}$ and $[\vw]\subset E_k$  one has 
$$
\nu_k([\vw])=\frac{1}{\# E_k}.
$$
The measure $\nu_k$ weakly convergence to a measure $\nu$, see
\cite[Chapter 1]{Mattila1995}.

For any $B(\vx, r)\subset \Sigma$ with  $0<r<1$ there exits $k$ such that 
$$
2^{-n_{k+1}}<r\le 2^{-n_k}.
$$
We  now do a case by case argument due to the range of $r$.

{\bf Case 1:}  Suppose that $2^{-\widetilde{n}_k}\le r\le 2^{-n_k}$. Since 
\begin{equation*}
\#\{\vw\in \Sigma_{n_k}: B(\vx, r)\cap [\vw]\neq \emptyset\}\le 2,
\end{equation*}
we obtain
\begin{equation*}
\nu(B(\vx, r)) \le \frac{2}{\# E_{n_k}}\ll 2^{-\widetilde{n}_k t}\ll r^{t}.
\end{equation*}

{\bf Case 2:} Suppose that $2^{-n_{k+1}}<r<2^{-\widetilde{n}_k}$. Since 
\begin{equation*}
\#\{\vw\in \Sigma_{ \widetilde{n}_{k+1}}: [\vw]\subset E_{k+1}, B(\vx, r)\cap [\vw]\neq \emptyset\}\ll \frac{r}{2^{-n_{k+1}}},
\end{equation*}
we obtain
\begin{equation*}
\nu(B(\vx, r))\ll \frac{r}{2^{-n_{k+1}}} \frac{1}{\# E_{k+1}}.
\end{equation*}
Recalling that $q_{k+1}=2^{n_{k+1}-\widetilde{ n}_k}$. Then (note that $t<1$)
\begin{equation*}
 \nu(B(\vx, r)) \ll \frac{r}{2^{-\widetilde{ n}_k}} \frac{1}{\# E_{k}}\ll r2^{\widetilde{ n}_k(1-t)}\ll r^{t}.
\end{equation*} 
Applying Lemma \ref{lem:mass} we obtain $\dim_H E\ge t$,  and this gives $\dim_H E\ge s$ by the arbitrary choice of $t<s$. Combining with the previous upper bound, $\dim_H E\le s$, we finish the proof.
\end{proof}

\begin{corollary}
\label{cor:dimension}
Using the above notation, let $n_k\nearrow\infty$ rapidly such that 
\begin{equation*}
\lim_{k\rightarrow \infty}\frac{\log \prod_{j=1}^k q_j}{\log q_{k+1}}=0,
\end{equation*}
Then $\dim_H E= 1/(1+\varepsilon)$.
\end{corollary}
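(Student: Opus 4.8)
The plan is to feed the hypothesis into the formula supplied by the preceding lemma, namely
\[
\dim_H E=\liminf_{k\to\infty}\frac{\log\prod_{j=1}^{k}q_j}{\widetilde{n}_k\log 2},
\]
and to show that the right-hand side collapses to $1/(1+\varepsilon)$. The first and purely computational step is to evaluate the product. Since $q_1=2^{n_1}$ and $q_{j+1}=2^{n_{j+1}-\widetilde{n}_j}$, the exponents telescope, and using $\widetilde{n}_j=n_j+\floor{\varepsilon n_j}$ one finds
\[
\log_2\prod_{j=1}^{k}q_j=\sum_{j=1}^{k}n_j-\sum_{j=1}^{k-1}\widetilde{n}_j=n_k-\sum_{j=1}^{k-1}\floor{\varepsilon n_j}.
\]
Writing $R_k=\sum_{j=1}^{k-1}\floor{\varepsilon n_j}$, the lemma then becomes
\[
\dim_H E=\liminf_{k\to\infty}\frac{n_k-R_k}{n_k+\floor{\varepsilon n_k}}.
\]

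Next I would observe that $\floor{\varepsilon n_k}/n_k\to\varepsilon$ as $n_k\to\infty$, so the only remaining issue is the size of $R_k$ relative to $n_k$. If one can prove that $R_k=o(n_k)$, then each quotient converges to $1/(1+\varepsilon)$, so the limit (not merely the $\liminf$) equals $1/(1+\varepsilon)$, and the corollary follows at once.

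Thus the heart of the matter, and the step I expect to be the main obstacle, is deducing $R_k=o(n_k)$ from the growth hypothesis. To this end set $P_k=\log_2\prod_{j=1}^{k}q_j=n_k-R_k$, so that $P_{k+1}=P_k+\log_2 q_{k+1}$ and the hypothesis reads $P_k/\log_2 q_{k+1}\to 0$; this forces $P_{k+1}/P_k\to\infty$. On the other hand $R_{k+1}=R_k+\floor{\varepsilon n_k}\le R_k+\varepsilon(P_k+R_k)=(1+\varepsilon)R_k+\varepsilon P_k$, and dividing by $P_{k+1}$ gives
\[
\frac{R_{k+1}}{P_{k+1}}\le\frac{P_k}{P_{k+1}}\Bigl((1+\varepsilon)\frac{R_k}{P_k}+\varepsilon\Bigr).
\]
Since $P_k/P_{k+1}\to 0$, a short argument on this recursion shows first that the ratios $R_k/P_k$ stay bounded and then that they tend to $0$; hence $R_k=o(P_k)=o(n_k)$, which is exactly what is needed. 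I would close by noting that the telescoping identity and the elementary limit $\floor{\varepsilon n_k}/n_k\to\varepsilon$ are routine, so that essentially all the content lies in this last recursive estimate.
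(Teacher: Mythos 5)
Your argument is correct: the telescoping identity $\log_2\prod_{j=1}^k q_j=n_k-\sum_{j=1}^{k-1}\floor{\varepsilon n_j}$, the reduction to showing $R_k=o(n_k)$, and the recursive estimate $r_{k+1}\le c_k((1+\varepsilon)r_k+\varepsilon)$ with $c_k=P_k/P_{k+1}\to 0$ (which indeed first bounds and then kills $r_k=R_k/P_k$) together give the stated limit. The paper states this corollary without proof, and your computation is precisely the verification it leaves implicit, so there is nothing to contrast.
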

 
\section{Proofs of main results}

\subsection{Proof of Theorem \ref{thm:uniform}}

We will use the following Weyl's criterion  \cite[Theorem 1.19]{DT}.


\begin{lemma}[Weyl criterion]
\label{lem:Weyl} A sequence $(x_n)_{n\ge 1}$ of points in $\R$ is u.d. mod $1$ if and only if 
\begin{equation*}
\lim_{N\rightarrow \infty}\frac{1}{N} \sum_{n=1}^{N}\e(hx_n)=0
\end{equation*}
holds for all non-zero integral point $h\in \Z\setminus \{0\}$.
\end{lemma}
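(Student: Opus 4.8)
The plan is to prove both implications by passing through Ces\`aro averages of test functions, using the observation that the exponential sum condition is precisely the statement that $\frac1N\sum_{n=1}^N f(\{x_n\}) \to \int_0^1 f$ for the particular functions $f(x)=\e(hx)$, while the definition of u.d.\ mod $1$ is the same statement for indicator functions $f=\mathbf 1_I$. The bridge between these two families of test functions is the Weierstrass approximation theorem for continuous $1$-periodic functions, combined with a sandwiching argument around the discontinuities of $\mathbf 1_I$.

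First I would record the elementary integrals $\int_0^1 \e(hx)\,dx = 0$ for every $h\in\Z\setminus\{0\}$ and $\int_0^1 \e(0\cdot x)\,dx=1$. Then for any trigonometric polynomial $P(x)=\sum_{|h|\le H} c_h\,\e(hx)$, linearity of the average together with the hypothesis applied termwise to each $h\neq 0$ gives
$$
\frac1N\sum_{n=1}^N P(\{x_n\}) \longrightarrow c_0 = \int_0^1 P(x)\,dx
$$
as $N\to\infty$. This establishes the averaging identity for the class of trigonometric polynomials, which is uniformly dense in the continuous $1$-periodic functions.

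For the sufficiency direction (the exponential sum condition implies u.d.\ mod $1$), fix an interval $I=[a,b)\subseteq[0,1)$ and $\eta>0$. I would construct two continuous $1$-periodic functions $f_-\le \mathbf 1_I\le f_+$ with $\int_0^1(f_+-f_-)\,dx<\eta$, for instance by inserting piecewise linear ramps of width $\eta$ near the endpoints $a$ and $b$. By the Weierstrass approximation theorem each $f_\pm$ lies within $\eta$ of a trigonometric polynomial in the uniform norm, so the averaging identity transfers from polynomials to $f_\pm$ with an error controlled uniformly in $N$. Sandwiching then yields
$$
\int_0^1 f_-\,dx - \eta \le \liminf_{N\to\infty}\frac1N\sum_{n=1}^N \mathbf 1_I(\{x_n\}) \le \limsup_{N\to\infty}\frac1N\sum_{n=1}^N \mathbf 1_I(\{x_n\}) \le \int_0^1 f_+\,dx + \eta,
$$
and since both $\int_0^1 f_\pm\,dx$ lie within $\eta$ of $|I|$, letting $\eta\to 0$ gives the defining limit $\frac1N\#\{n\le N:\{x_n\}\in I\}\to |I|$, i.e.\ u.d.\ mod $1$.

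For the necessity direction I would run the same approximation in reverse: assuming u.d.\ mod $1$, the interval definition gives the averaging identity for indicator functions, hence for finite linear combinations of them (step functions), and then for every continuous periodic $f$ by uniform approximation by step functions. Applying this to the continuous function $\e(hx)$ and using $\int_0^1 \e(hx)\,dx=0$ for $h\neq0$ produces the exponential sum limit. The main obstacle is the approximation bookkeeping in the sufficiency direction: one must ensure that both error sources, namely the gap $f_+-f_-$ around the jumps of $\mathbf 1_I$ and the uniform trigonometric approximation of $f_\pm$, are independent of $N$, so that they survive the passage to $\liminf$ and $\limsup$. Everything else reduces to linearity and the elementary computation of $\int_0^1\e(hx)\,dx$.
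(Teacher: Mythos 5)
Your proof is correct and is the standard argument for Weyl's criterion: Weierstrass approximation by trigonometric polynomials in one direction, and sandwiching indicator functions between continuous periodic functions in the other, with all approximation errors uniform in $N$. The paper does not prove this lemma but cites it from Drmota and Tichy \cite[Theorem 1.19]{DT}, and your argument is essentially the proof given there, so there is nothing to reconcile.
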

 
We will also need the following second moment estimate.

\begin{lemma}
\label{lem:second-moment}
For any  $h\in \Z\setminus\{0\}$ we have 
$$
\EE(|\sum_{n=1}^N\e(h S_n(\vx)|^2)=O_{\alpha, \beta, h}(N).
$$
\end{lemma}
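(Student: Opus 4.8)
The plan is to expand the second moment into a double sum and exploit the independence of the increments. Since $\e(hS_n(\vx))=\prod_{i=1}^{n}\e(hX_i(\vx))$, I would first introduce the single common factor
$$
\mu=\EE[\e(hX_1)]=\tfrac12\bigl(\e(h\alpha)+\e(h\beta)\bigr),
$$
and note that for $n>m$ the increment $S_n-S_m=\sum_{i=m+1}^{n}X_i$ depends only on the independent variables $X_{m+1},\dots,X_n$, so that $\EE[\e(h(S_n-S_m))]=\mu^{n-m}$. Writing out the square of the modulus then gives, with the diagonal $n=m$ contributing $N$ and the pairs $n<m$ contributing the complex conjugates of the pairs $n>m$,
$$
\EE\Bigl|\sum_{n=1}^{N}\e(hS_n(\vx))\Bigr|^2
=\sum_{n,m=1}^{N}\EE[\e(h(S_n-S_m))]
=N+2\,\mathrm{Re}\sum_{d=1}^{N-1}(N-d)\mu^{d}.
$$

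It then remains to show that $\sum_{d=1}^{N-1}(N-d)\mu^{d}=O_{\alpha,\beta,h}(N)$, and the whole matter is controlled by the modulus of $\mu$. Since $\mu$ is the average of two points on the unit circle, we always have $|\mu|\le 1$, with equality precisely when $\e(h\alpha)=\e(h\beta)$. I would therefore split into two cases according to whether $|\mu|<1$ or $|\mu|=1$.

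In the case $|\mu|<1$ the bound is immediate: by the triangle inequality and a geometric series,
$$
\Bigl|\sum_{d=1}^{N-1}(N-d)\mu^{d}\Bigr|\le N\sum_{d\ge 1}|\mu|^{d}=\frac{N|\mu|}{1-|\mu|}=O_{\alpha,\beta,h}(N),
$$
the implied constant depending on $\alpha,\beta,h$ only through $|\mu|$, which is bounded away from $1$. In the case $|\mu|=1$, write $\mu=\e(\theta)$; then the displayed expression collapses to a Fej\'er-type identity, namely it equals $\bigl|\sum_{n=1}^{N}\e(n\theta)\bigr|^{2}\le \sin^{-2}(\pi\theta)$, a geometric sum bounded independently of $N$, as long as $\theta\notin\Z$.

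The only real (and mild) obstacle is to guarantee $\mu\neq 1$ so that the second case genuinely yields a bounded geometric sum. But $\mu=1$ would force $\e(h\alpha)=\e(h\beta)=1$, i.e.\ both $h\alpha,h\beta\in\Z$, and hence both $\alpha,\beta$ rational since $h\neq 0$; this is ruled out by the standing hypothesis that at least one of $\alpha,\beta$ is irrational. Thus $\theta\notin\Z$, the geometric sum is truly bounded, and in all cases the second moment is $O_{\alpha,\beta,h}(N)$, as claimed.
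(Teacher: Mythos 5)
Your proposal is correct and follows essentially the same route as the paper's proof: open the square, use independence of the increments to reduce to powers of $\mu=\tfrac12(\e(h\alpha)+\e(h\beta))$, and split into the cases $|\mu|<1$ (geometric series) and $|\mu|=1$ (where the sum collapses to $|\sum_{n\le N}\e(n\theta)|^2$ with $\theta\notin\Z$ forced by the irrationality hypothesis). Your explicit verification that $\mu\neq 1$ is a small point the paper leaves implicit, but the argument is the same.
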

\begin{proof}
First by opening the square we have 
\begin{equation} \label{eq:square}
\begin{aligned}
\left |\sum_{n=1}^N\e(hS_n(\vx))\right |^2 &=\sum_{1\le m, n\le N} \e(hS_n(\vx)-hS_m(\vx))\\
&=N+ \sum_{1\le m<n\le N} \e(hS_n(\vx)-hS_m(\vx)) \\
& \quad \quad \quad +  \sum_{1\le m<n\le N}\overline{\e(hS_n(\vx)-hS_m(\vx))}.
\end{aligned}
\end{equation}

Using the independence of the random variables $X_k$,  we obtain (suppose that $m<n$)
\begin{equation}\label{eq:mn}
\begin{aligned}
\EE(\e(hS_n(\vx)-hS_m(\vx)))&=\EE(\e (\sum_{m<k\le n} h X_k(\vx)))\\
&=\prod_{m<k\le n}  \EE(\e(h X_k(\vx)))=\theta^{n-m},
\end{aligned} 
\end{equation}
where 
$$
\theta=\frac{1}{2}(\e(h\alpha)+\e(h\beta)).
$$

We next proceed on a case by case basis depending on $\e(h\alpha)=\e(h\beta)$ or not.

{\bf Case 1:}  Suppose that 
$\e(h\alpha)\neq \e(h\beta)$. Then $|\theta|<1$. Thus by \eqref{eq:mn} we obtain
\begin{align*}
\sum_{1\le  m<n\le N} &\EE(\e(hS_n(\vx)-hS_m(\vx))) \\
&=\sum_{1\le  m<n\le N} \theta^{n-m} \le \sum_{k=1}^N N \theta^{k}=O(N).
\end{align*} 
Combining with \eqref{eq:square} we obtain the desired bound for this case.

{\bf Case 2:} Now suppose that 
$$
\e(h\alpha)= \e(h\beta),
$$
and $\alpha$ is irrational.  Then for $m<n$,  identity  \eqref{eq:mn} gives, 
$$
\EE(\e(hS_n(\vx)-hS_m(\vx)))=\e(h\alpha(n-m) ).
$$
By \eqref{eq:square} we derive 
\begin{align*}
&\EE(|\sum_{n=1}^N\e(hS_n(\vx)) |^2 )\\
&=N+\sum_{1\le m<n\le N} \e(h\alpha(n-m) )\\
&\quad\quad\quad\quad\quad\quad\quad\quad +
 \sum_{1\le m<n\le N}\overline{\e(h\alpha(n-m))}\\
 &=|\sum_{n=1}^{N}\e(h \alpha n)|^2\ll \frac{1}{|\e(h\alpha)-1|^2}\ll 1.
\end{align*}
The last inequality holds by the fact that $\alpha$ is irrational.  Together with the {\bf Case 1} we finish the proof.
\end{proof}

In fact combining Lemma~\ref{lem:second-moment}  with the criterion  of  Davenport, Erd\H{o}s and  LeVeque~\cite{DEL}, we obtain the result of Theorem~\ref{thm:uniform}. However we show a simper proof for our setting in the following. 

For each $h\in \Z\setminus\{0\}$ let 
\begin{equation}
\label{eq:W}
W_{N, h}(\vx)=\sum_{n=1}^{N}\e(hS_n(\vx)).
\end{equation}
Let $\rho>1$. For  each $i\in \N$ let $N_i=i^{\rho}$. By Lemma~\ref{lem:second-moment}  we obtain 
$$
\EE\left (  (N_i^{-1} W_{N_i, h})^{2}\right )\ll 1/i^{\rho}.
$$
Combining with the monotone convergence  theorem~\cite[Theorem 1.18]{EG}, we obtain
$$
\EE\left (  \sum_{i=1}^{\infty} (N_i^{-1} W_{N_i, h})^{2}\right )=\sum_{i=1}^{\infty} \EE\left (  (N_i^{-1} W_{N_i, h})^{2}\right )<\infty.
$$
Thus for almost all $\vx\in \Sigma$ we have 
$$
\sum_{i=1}^{\infty} (N_i^{-1} W_{N_i, h}(\vx))^{2}<\infty,
$$
and hence 
\begin{equation}
\label{eq:littleO}
W_{N_i, h}(\vx)=o(N_i) \quad \text{as} \quad i\rightarrow \infty.
\end{equation}
We will use this $\vx$  in the following argument.
For each $N$ there exists $N_i$ such that $N_i\le N<N_{i+1}$. Note that 
$$
N_{i+1}-N_i\ll i^{\rho-1}\ll N^{1-1/\rho}.
$$
Clearly we have 
$$
\left| \sum_{N_i<n\le N}\e(h S_n(\vx)) \right| \ll N_{i+1}-N_i \ll N^{1-\rho}.
$$
Thus combining with~\eqref{eq:littleO} we obtain 
$$
W_{N, h}(\vx)= W_{N_i, h}(\vx)+ \sum_{N_i<n\le N}\e(h S_n(\vx))=o(N),
$$
as $N\rightarrow \infty$. Therefore   for any non-zero integer $h$ we have that  for almost all $\vx\in \Sigma$ one has 
$$
W_{N, h}(\vx)=o(N).
$$
Since a countable union of sets with measure   zero has measure zero, we conclude that for almost all $\vx\in \Sigma$  and  for any   non-zero integer $h$  one has 
$$
W_{N, h}(\vx)=o(N).
$$
Hence by using Weyl criterion~\ref{lem:Weyl} we finish the proof of Theorem~\ref{thm:uniform}.
\subsection{Proof of Theorem \ref{thm:exponentialsums}}

We will use the completion trick of \cite[Lemma 3.2]{ChSh} to our setting, we present the proof here for completeness. 

\begin{lemma} 
\label{lem:completion}
Let $h\in \Z\setminus\{0\}$. Then for any $1\le M\le N$ and $\vx\in \Sigma$ we have 
\begin{equation*}
\sum_{n=1}^{M}\e(hS_n(\vx)) \ll U_{N,h} (\vx),
\end{equation*}
where 
$$
U_{N, h}(\vx)=\sum_{j=-N}^{N} \frac{1}{|j|+1}\left |\sum_{n=1}^{N} \e(hS_n(\vx))\e(jn/N)\right |.
$$
\end{lemma}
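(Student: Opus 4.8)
The plan is to use the classical completion-of-sums technique, realising the sharp cutoff at $M$ by a finite Fourier expansion modulo $N$. Write $a_n=\e(hS_n(\vx))$ for brevity; the inequality is purely a statement about the partial sums of the sequence $(a_n)$ and makes no use of its arithmetic origin. Since $1\le n\le M\le N$, I would expand the indicator of $[1,M]$ on the residues $\{1,\dots,N\}$ by discrete Fourier inversion on $\Z/N\Z$:
$$
\mathbf{1}_{[1,M]}(n)=\sum_{j=0}^{N-1} c_j\,\e(jn/N),\qquad c_j=\frac1N\sum_{m=1}^M \e(-jm/N).
$$
Substituting this into $\sum_{n=1}^M a_n=\sum_{n=1}^N \mathbf{1}_{[1,M]}(n)\,a_n$ and interchanging the two finite sums gives $\sum_{n=1}^M a_n=\sum_{j=0}^{N-1} c_j\sum_{n=1}^N a_n\,\e(jn/N)$, whence $\bigl|\sum_{n=1}^M a_n\bigr|\le \sum_{j=0}^{N-1}|c_j|\,\bigl|\sum_{n=1}^N a_n\,\e(jn/N)\bigr|$.

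The second step is to bound the Fourier coefficients $c_j$. For $j=0$ one has $c_0=M/N\le 1$, while for $1\le j\le N-1$ summing the geometric series and using $|\e(\theta)-1|=2|\sin\pi\theta|\gg\|\theta\|$ (here $\|\cdot\|$ denotes distance to the nearest integer) yields
$$
|c_j|\ll \frac1N\cdot\frac1{|\e(-j/N)-1|}\ll \frac1{N\,\|j/N\|}.
$$

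The third step, which I expect to be the only delicate point, is to convert the weights $1/(N\|j/N\|)$ into the weights $1/(|j|+1)$ and to match the symmetric index range $-N\le j\le N$ appearing in $U_{N,h}(\vx)$. Because $j\mapsto\e(jn/N)$ is $N$-periodic, I would replace each $j\in\{1,\dots,N-1\}$ by its representative $j'$ of least absolute value, so that $\e(jn/N)=\e(j'n/N)$ and $\|j/N\|=|j'|/N$, and hence $|c_j|\ll 1/(N\|j/N\|)=1/|j'|$. As $j$ runs over $\{0,1,\dots,N-1\}$ the representatives $j'$ fill out the integers in $(-N/2,N/2]$, a subset of $[-N,N]$, and the $j=0$ term (coefficient $\le 1$) is absorbed by using the weight $1/(|j'|+1)$. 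Therefore
$$
\Bigl|\sum_{n=1}^M a_n\Bigr|\ll \sum_{j'}\frac1{|j'|+1}\Bigl|\sum_{n=1}^N a_n\,\e(j'n/N)\Bigr|\le U_{N,h}(\vx),
$$
where in the last step I have only enlarged the range to all $-N\le j'\le N$, which adds non-negative terms. Recalling $a_n=\e(hS_n(\vx))$ completes the argument. The care required here is purely bookkeeping: checking that the least-residue reindexing is a bijection onto a subset of the stated range and that the boundary term $j=0$ is correctly handled by the $+1$ in the denominator.
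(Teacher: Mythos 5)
Your argument is correct and is essentially the paper's own proof: both realise the sharp cutoff at $M$ by completing the sum via discrete Fourier inversion modulo $N$ (the paper writes this as the double orthogonality sum $\frac1N\sum_{j=1}^N\sum_{k=1}^M\e(j(n-k)/N)$, which is your expansion of $\mathbf{1}_{[1,M]}$ with the coefficients $c_j$ left unexpanded), then bound the geometric series by $N/\min\{j,N-j+1\}\asymp 1/\|j/N\|$ and reindex to the symmetric range $-N\le j\le N$. The bookkeeping you flag (least-residue reindexing, the $j=0$ term absorbed by the $+1$) is handled correctly, so no changes are needed.
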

\begin{proof} 

Observe that by the orthogonality 
$$
\frac{1}{N}\sum_{j=1}^{N}\sum_{k=1}^{M} \e\left(j(n-k)/N\right)=
  \begin{cases}
   1 & n=1, \ldots, M, \\
  0 & \text{otherwise}.
  \end{cases}
$$
We also note that for $1 \le j ,M \le N$ we have
$$
  \sum_{k=1}^{M}\e\left(j k/N\right ) \ll  \frac{N}{\min\{j, N - j+1\}},
$$
see \cite[Equation~(8.6)]{IwKow}.  Recalling the notation $ W_{M, h}(\vx)$ at \eqref{eq:W}, i.e.,  
$$
W_{M, h}(\vx)=\sum_{n=1}^{N}\e(hS_n(\vx)).
$$

It follows that 
\begin{align*}
W_{M, h}(\vx)
&=\sum_{n=1}^{N} \e(hS_n(\vx)) \frac{1}{N}\sum_{j=1}^{N}\sum_{k=1}^{M} \e\left(j(n-k)/N\right)\\
& =\frac{1}{N}\sum_{j=1}^{N} \sum_{k=1}^{M} \e(-jk/N) \sum_{n=1}^{N} \e(hS_n(\vx))\e(jn/N)\\
&\ll \frac{1}{N}\sum_{j=1}^{N} \frac{N}{\min\{j, N - j+1\}}\left |\sum_{n=1}^{N} \e(hS_n(\vx))\e(jn/N)\right |\\
&\ll \sum_{j=-N}^{N} \frac{1}{|j|+1}\left |\sum_{n=1}^{N} \e(hS_n(\vx))\e(jn/N)\right |,
\end{align*}
which completes the proof.
\end{proof}

In analogy of Lemma \ref{lem:second-moment} we obtain 

\begin{lemma} 
\label{lem:U}
Using the notation from Lemma \ref{lem:completion}, 
for each $N\in \N$ we have 
\begin{equation*}
\EE(U_{N, h}(\vx)^{2})\ll N (\log N)^{2}.
\end{equation*}
\end{lemma}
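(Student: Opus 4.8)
The plan is to reduce the second moment of $U_{N,h}$ to the second moments of the individual twisted sums inside it, and then reuse the computation of Lemma~\ref{lem:second-moment}. Write
$$
V_j(\vx)=\sum_{n=1}^{N}\e(hS_n(\vx))\e(jn/N),
$$
so that $U_{N,h}(\vx)=\sum_{j=-N}^{N}\frac{1}{|j|+1}|V_j(\vx)|$. First I would apply the Cauchy--Schwarz inequality, splitting each weight as $\frac{1}{|j|+1}=(\tfrac{1}{|j|+1})^{1/2}(\tfrac{1}{|j|+1})^{1/2}$, to obtain
$$
U_{N,h}(\vx)^2\le\Big(\sum_{j=-N}^{N}\frac{1}{|j|+1}\Big)\Big(\sum_{j=-N}^{N}\frac{1}{|j|+1}|V_j(\vx)|^2\Big)\ll(\log N)\sum_{j=-N}^{N}\frac{1}{|j|+1}|V_j(\vx)|^2,
$$
using $\sum_{j=-N}^{N}\frac{1}{|j|+1}\ll\log N$. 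Taking expectations and using linearity, it then suffices to establish that $\sum_{j=-N}^{N}\frac{1}{|j|+1}\,\EE(|V_j(\vx)|^2)\ll N\log N$.

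To handle $\EE(|V_j|^2)$ I would open the square exactly as in~\eqref{eq:square}--\eqref{eq:mn}, getting $\EE(|V_j|^2)=N+2\,\mathrm{Re}\sum_{1\le m<n\le N}\theta^{\,n-m}\e(j(n-m)/N)$ with $\theta=\tfrac12(\e(h\alpha)+\e(h\beta))$, and then split into the same two cases used there. In Case~1, where $\e(h\alpha)\ne\e(h\beta)$ and hence $|\theta|<1$, the estimate $\EE(|V_j|^2)\ll\sum_{|k|<N}(N-|k|)|\theta|^{|k|}\ll_{\alpha,\beta,h}N$ holds \emph{uniformly} in $j$; summing against $\frac{1}{|j|+1}$ yields $\ll N\log N$, which is precisely the required bound and produces the final $N(\log N)^2$.

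Case~2, where $\e(h\alpha)=\e(h\beta)$ and $\alpha$ is irrational, is where the uniform bound fails and is the main obstacle. The key simplification is that here $\e(hS_n(\vx))=\e(h\alpha n)$ for \emph{every} $\vx$, so $V_j$ is deterministic and equals the geometric sum $\sum_{n=1}^{N}\e((h\alpha+j/N)n)$; consequently $\EE(|V_j|^2)=|V_j|^2\ll\min(N^2,\|h\alpha+j/N\|^{-2})$, where $\|\cdot\|$ denotes the distance to the nearest integer. The mechanism that saves us is that $h\alpha$ is a \emph{fixed} irrational: for $|j|$ small, where the weight $\frac{1}{|j|+1}$ is large, $\|h\alpha+j/N\|$ is bounded below by a positive constant, while the resonances---the $O(1)$ values of $j$ for which $\|h\alpha+j/N\|$ is close to $0$---necessarily occur at $|j|\asymp N$, where the weight is only of size $1/N$.

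Concretely, I would split the range according to whether $\|h\alpha+j/N\|$ exceeds $\tfrac12 c_0$, where $c_0:=\min(\{h\alpha\},1-\{h\alpha\})>0$. On the generic part the minimum is $O_{\alpha,h}(1)$ and the contribution is $\ll\log N$; on each of the (at most two) resonant windows, centred near $j_0\approx-N\{h\alpha\}$ and $j_0\approx N(1-\{h\alpha\})$, one has $|j|\gg_{\alpha,h}N$, so $\frac{1}{|j|+1}\ll_{\alpha,h}\frac1N$, and using $\|h\alpha+j/N\|\asymp|j-j_0|/N$ together with $\sum_{k\in\Z}\min(1,k^{-2})\ll1$ the contribution is $\ll_{\alpha,h}N$. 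Thus the whole Case~2 sum is $\ll_{\alpha,h}N$, even stronger than needed, and combining the two cases finishes the proof. The only delicate point is the bookkeeping in the resonant windows, namely checking that a window cannot reach the small-$|j|$ region---this is guaranteed precisely because $\{h\alpha\}$ is bounded away from both $0$ and $1$.
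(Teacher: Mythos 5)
Your proof is correct, and it follows the paper's top-level strategy: Cauchy--Schwarz against the weights $1/(|j|+1)$ to pull out a factor $\log N$, followed by a termwise second-moment bound for the twisted sums $V_j$. Where you genuinely diverge is in Case~2, and your extra work there is not optional. The paper's outline simply asserts that ``the same argument as in the proof of Lemma~\ref{lem:second-moment}'' gives $\EE(|V_j|^2)\ll N$ uniformly in $j$; as you correctly observe, this is false in Case~2: when $\e(h\alpha)=\e(h\beta)$ the sum $V_j$ is the deterministic geometric series $\sum_{n\le N}\e((h\alpha+j/N)n)$, and for the $O(1)$ resonant values of $j$ at which $h\alpha+j/N$ lies within $O(1/N)$ of an integer one has $|V_j|^2\asymp N^2$, not $O(N)$ (and such $N$, $j$ do occur infinitely often). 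Your resonance analysis --- that any such $j$ must satisfy $|j|\gg_{\alpha,h}N$ because the distance from $h\alpha$ to the nearest integer is a fixed positive constant, so the weight $1/(|j|+1)$ is $O_{\alpha,h}(1/N)$ there, combined with the standard estimate $\sum_{j}\min\bigl(N^2,\,N^2|j-j_0|^{-2}\bigr)\ll N^2$ over a resonant window --- is exactly the repair needed, and it even yields the stronger bound $\ll N\log N$ for the weighted sum in that case. In short, your proposal supplies a step that the paper's outline elides and that cannot be obtained by a verbatim transfer of the Case~2 computation from Lemma~\ref{lem:second-moment}; the statement of the lemma survives only because of the weights, which is precisely the point your argument makes explicit.
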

\begin{proof}[Outline of the proof]
Applying  Cauchy-Schwarz inequality for $U_{N, h}(\vx)$, we obtain 
\begin{equation}
\label{eq:U2}
U_{N,h}(\vx)^2\ll \log N \sum_{j=-N}^{N}\frac{1}{|j|+1} \left |\sum_{n=1}^{N} \e(hS_n(\vx))\e(jn/N) \right |^{2}.
\end{equation}
For any integer $j$ with $|j|\le N$ by opening the square and using the same argument as in the proof of Lemma \ref{lem:second-moment},   we obtain 
\begin{equation*}
\EE(|\sum_{n=1}^{N} \e(hS_n(\vx))\e(jn/N) |^{2})\ll N.
\end{equation*}
Together with \eqref{eq:U2} we obtain the desired bound. 
\end{proof}

Now we turn to the proof of Theorem \ref{thm:exponentialsums}.  For each $i\in \N$ let $N_i=2^{i}$. For  $N\in \N$ and $\varepsilon>0$ let
$$
A_{N, h}=\{x\in \Sigma: |U_{N, h}(\vx)|\ge N^{1/2}(\log N)^{3/2+\varepsilon}\}.
$$
By Lemma \ref{lem:U} and Chebyshev inequality we obtain 
\begin{equation*}
\PP(A_{N, h})\ll \frac{1}{(\log N)^{1+2\varepsilon}}.
\end{equation*}
Combining  with the choice $N_i=2^{i}$,   we have 
$$
\sum_{i=1}^{\infty} \PP(A_{{N_i, h}})<\infty.
$$
The Borel-Cantelli lemma implies that for almost all $\vx\in \Sigma$ there exists $i_{\vx}$ such that for any $i\ge i_{\vx}$ we have 
\begin{equation}
\label{eq:control}
U_{{N_i, h}}(\vx)\le N_i^{1/2}(\log N_i)^{3/2+\varepsilon}.
\end{equation}
We now fix this $\vx$ in the following.  
For any $N\ge N_{i_{\vx}}$ there exists $i$ such that 
$$
N_{i}\le N< N_{i+1}.
$$
Applying Lemma \ref{lem:completion} and \eqref{eq:control} we obtain 
\begin{equation*}
W_{N_i, h}(\vx)\ll U_{N_{i+1}, h}(\vx)\ll N^{1/2}(\log N)^{3/2+\varepsilon}.
\end{equation*}

Note that we proved  if $\varepsilon>0$ then for almost all $\vx$ one has 
\begin{equation*}
W_{N, h}(\vx)=N^{1/2}(\log N)^{3/2+\varepsilon}.
\end{equation*}
By taking a sequence $\varepsilon_q \searrow 1/2$ as $q\rightarrow \infty$ and  using the fact that a countable union of sets with measure zero has measure zero, we finish the proof.

\subsection{Proof of Theorem \ref{thm:topology}}

Let $0<\tau<\min\{\alpha, \beta, |\beta-\alpha|\}$. The following lemma shows that there exists an orbit such that the orbit does not intersect $(0, \tau)$ for arbitrary length steps.

For each $\vx\in \Sigma$, interval $I\subseteq (0,1)$ and $n\in \N$ define (hitting times)
\begin{equation*}
h(\vx, I, n)=\#\{1\le j\le n: S_j(\vx)\in I\}.
\end{equation*}

\begin{lemma}
\label{lem:jump}
For any $\vu\in \Sigma_n$ and any $m\in \N$ there exists  a word $\vv=\vv(\vu, m)\in \Sigma_{m}$ such that for each $\vx\in [\vu\vv]$ one has 
\begin{equation*}
S_j(\vx) \notin (0, \tau), \, \forall n<j\le n+m.
\end{equation*}
Moreover for each $\vx\in [\vu\vv]$ we have 
\begin{equation*}
\frac{h(\vx, (0, \tau), n+m)}{n+m}\le \frac{n}{n+m}.
\end{equation*}
\end{lemma}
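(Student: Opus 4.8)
The plan is to reduce the whole statement to a single geometric observation about the two admissible moves $+\alpha$ and $+\beta$ on the circle $\T$, and then to construct $\vv$ by a greedy, step-by-step choice. The point is that the hypothesis $\tau<\min\{\alpha,\beta,|\beta-\alpha|\}$ is exactly what guarantees that the two moves cannot both land in the forbidden arc $(0,\tau)$, so one can always keep the orbit out of it.

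First I would establish the key claim: for every $y\in\T$ at least one of the points $y+\alpha$ and $y+\beta$ lies outside the open arc $(0,\tau)$. Suppose for contradiction that both lie in $(0,\tau)$. Since the circle distance is translation invariant, the distance between $y+\alpha$ and $y+\beta$ equals $\|\alpha-\beta\|:=\min\{|\alpha-\beta|,\,1-|\alpha-\beta|\}$; on the other hand, two points lying in a common arc of length $\tau$ are at circle distance strictly less than $\tau$, whence $\|\alpha-\beta\|<\tau$. But the hypothesis on $\tau$ forces $\|\alpha-\beta\|>\tau$: indeed $|\alpha-\beta|>\tau$ is immediate, while (assuming without loss of generality $\alpha<\beta$) the bounds $\alpha>\tau$ and $\beta<1$ give $1-|\alpha-\beta|=1-(\beta-\alpha)>1-(\beta-\tau)=1-\beta+\tau>\tau$. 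This contradiction proves the claim.

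Having this, I would construct $\vv$ recursively. The prefix $\vu$ fixes the position $S_n$, which depends only on $\vu$. Given the position $S_{j-1}$ reached after $j-1$ steps, the claim lets us choose a symbol $x_j\in\{1,2\}$, equivalently an increment $X_j\in\{\alpha,\beta\}$, so that $S_j=S_{j-1}+X_j\notin(0,\tau)$; making such a choice (fixing a definite convention when both work) for $j=n+1,\dots,n+m$ defines the word $\vv=x_{n+1}\cdots x_{n+m}\in\Sigma_m$. Because $S_j(\vx)$ for $j\le n+m$ depends only on the first $n+m$ symbols of $\vx$, every $\vx\in[\vu\vv]$ satisfies $S_j(\vx)\notin(0,\tau)$ for all $n<j\le n+m$, which is the first assertion.

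The ``moreover'' bound is then immediate: since no index $j$ with $n<j\le n+m$ contributes to the hitting count, every visit of $(0,\tau)$ among the first $n+m$ steps must occur at some $j\le n$, so $h(\vx,(0,\tau),n+m)\le n$ and dividing by $n+m$ yields the stated inequality. The only genuine content is the geometric claim, and I expect its verification---specifically checking that the condition on $\tau$ delivers $\|\alpha-\beta\|>\tau$ on \emph{both} sides of the minimum defining the circle distance---to be the main (though modest) obstacle; the greedy construction and the counting step are then routine.
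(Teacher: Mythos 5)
Your proof is correct and supplies exactly the argument the paper intends but in fact omits: Lemma~\ref{lem:jump} is stated in the paper without any proof, and the later Lemma~\ref{lem:jumper2} merely refers back to ``the similar argument as in the proof of Lemma~\ref{lem:jump}''. Your geometric claim (that $\tau<\min\{\alpha,\beta,|\beta-\alpha|\}$ forces the circle distance between the two candidate moves to exceed $\tau$, so at least one move always avoids the arc) together with the greedy step-by-step choice of $\vv$ and the trivial counting bound is precisely the intended, and complete, proof.
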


Now we turn to the proof of Theorem \ref{thm:topology}. For each $\vu\in \Sigma_n$ by Lemma \ref{lem:jump} there exists $\vv=\vv(\vu, n^2)\in \Sigma_{n^2}$ such that for any $\vx\in [\vu\vv]$ it has the properties Lemma \ref{lem:jump}.  

For each $n\in \N$ let 
\begin{equation*}
B_n=\bigcup_{\vu\in \Sigma_n} [\vu \vv(\vu, n^{2})],
\end{equation*}
and 
\begin{equation*}
G=\bigcap_{k=1}^{\infty}\bigcup_{n=k}^{\infty} B_n.
\end{equation*}
Note that for each $k\in \N$ the set  $\bigcup_{n=k}^{\infty} B_n$ is a dense open set, thus  the set $G$ is a dense $\G_\delta$ set. 


Let $\vx\in G$ then there exists a sequence $n_k\nearrow \infty $ as $k\rightarrow \infty$  such that 
$\vx\in B_{n_k}$ for each $k\in \N$.  Moreover for $\vx\in B_{n_k}$ there exists $\vu\in \Sigma_{n_k}$ such that 
$$
\vx\in [\vu\vv(\vu, {n}_k^2)].
$$
Applying Lemma \ref{lem:jump} we obtain
\begin{equation}
\label{eq:break}
\frac{h(\vx, (0, \tau), n_k+n_k^2)}{n_k+n_k^{2}}\le \frac{n_k}{n_k+n_k^{2}}.
\end{equation}
Clearly the right side of \eqref{eq:break} goes to zero as $k\rightarrow \infty$, which implies that  $\vx\in E_{\alpha, \beta}$ and hence finishes the proof.

\subsection{Proof of Theorem \ref{thm:dimension}} 

Let $0<\tau<\min\{\alpha, \beta, |\beta-\alpha|\}$ such that $1/\tau \in \N$.  Divide the interval $[0,1]$ into $q:=1/\tau$ subintervals in a natural way, and denote the collection of these intervals  by
$$
\I=\{I_1, \ldots, I_q\}.
$$
In analogy of Lemma \ref{lem:jump} we formulate the following result.

\begin{lemma}
\label{lem:jumper2}
Let $\varepsilon>0$. For any  $\vu\in \Sigma_n, n\in \N$ with  $n\varepsilon>10$, there exist $\vv \in \Sigma_{\floor{\varepsilon n}}$ and $I\in \I$
such that for any $\vx\in [\vu\vv]$ one has 
\begin{equation}
\label{eq:jumper-i}
S_j(\vx)\notin I, \, \forall n< j\le n+\floor{\varepsilon n},
\end{equation}
and 
\begin{equation*}
\frac{h(\vx, I, n+\floor{\varepsilon n})}{n+\floor{\varepsilon n}}\le \frac{\tau}{1+\varepsilon/2}.
\end{equation*}
\end{lemma}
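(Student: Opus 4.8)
The plan is to mimic the escape argument implicit in Lemma~\ref{lem:jump}, adding a pigeonhole step to select the interval $I$ and a short arithmetic estimate to obtain the quantitative hitting bound. The heart of the matter is the following \emph{escape fact}: for every arc $I$ of length $\tau$ on $\T$ and every position $s\in\T$, at least one of the two moves $s+\alpha$, $s+\beta$ lands outside $I$. Equivalently, the two preimages $I-\alpha$ and $I-\beta$ are disjoint. Since these are arcs of length $\tau$ whose left endpoints are at circular distance $\|\alpha-\beta\|$ (the distance from $\alpha-\beta$ to the nearest integer), disjointness is equivalent to $\|\alpha-\beta\|>\tau$. I would first verify this from the hypothesis $\tau<\min\{\alpha,\beta,|\beta-\alpha|\}$, treating the wrap-around case with care: if $|\alpha-\beta|\le 1/2$ then $\|\alpha-\beta\|=|\alpha-\beta|>\tau$, while if $|\alpha-\beta|>1/2$, writing $\gamma=\max\{\alpha,\beta\}$ and $\delta=\min\{\alpha,\beta\}$, one has $\|\alpha-\beta\|=1-(\gamma-\delta)=(1-\gamma)+\delta>\delta>\tau$. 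Thus from any position at most one of the two moves can enter $I$, so a move avoiding $I$ always exists.

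Next I would select $I$. The word $\vu$ already determines the orbit points $S_1(\vx),\ldots,S_n(\vx)$ for every $\vx\in[\vu]$, and these $n$ points fall among the $q=1/\tau$ intervals of $\I$. Taking $\I$ to be the natural half-open partition of $[0,1)$ so that each $S_j$ lies in exactly one interval, we get $\sum_{I\in\I} h(\vu,I,n)=n$, so by averaging there is an interval $I\in\I$ with
$$
h(\vu,I,n)\le \frac{n}{q}=n\tau .
$$
Since the escape fact holds for \emph{every} arc of length $\tau$, this pigeonhole choice of $I$ imposes no conflict with escapability.

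With this $I$ fixed, I would build $\vv$ greedily: for $j=n+1,\ldots,n+\floor{\varepsilon n}$, given the current point $S_{j-1}$, choose $x_j\in\{1,2\}$ so that $S_j\notin I$; the escape fact guarantees a valid choice at every step. This produces $\vv\in\Sigma_{\floor{\varepsilon n}}$, and because the first $n+\floor{\varepsilon n}$ symbols are then prescribed, the values $S_j(\vx)$ for $j\le n+\floor{\varepsilon n}$ are common to all $\vx\in[\vu\vv]$, giving \eqref{eq:jumper-i}. Since no step in the range $(n,\,n+\floor{\varepsilon n}]$ hits $I$, we obtain $h(\vx,I,n+\floor{\varepsilon n})=h(\vu,I,n)\le n\tau$, whence the hitting ratio is at most $\tfrac{n\tau}{\,n+\floor{\varepsilon n}\,}$.

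It remains to check $\tfrac{n\tau}{\,n+\floor{\varepsilon n}\,}\le \tfrac{\tau}{1+\varepsilon/2}$, which reduces to $\floor{\varepsilon n}\ge \varepsilon n/2$. This follows from $\floor{\varepsilon n}>\varepsilon n-1$ together with $\varepsilon n>10>2$, completing the argument. I expect the main obstacle to be the escape step, and within it the wrap-around case $|\alpha-\beta|>1/2$: there it is essential to use the full hypothesis $\tau<\min\{\alpha,\beta\}$ and not merely $\tau<|\beta-\alpha|$, since otherwise the two preimage arcs can overlap and the greedy construction could get stuck. Everything else (pigeonhole, greedy continuation, and the floor estimate) is routine once the escape fact is in hand.
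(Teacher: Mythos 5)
Your proposal is correct and follows essentially the same route as the paper: a pigeonhole selection of $I\in\I$ with $h(\vx,I,n)\le n\tau$, followed by a greedy extension $\vv$ that avoids $I$ for $\floor{\varepsilon n}$ steps, and the elementary floor estimate to pass from $\tfrac{n\tau}{n+\floor{\varepsilon n}}$ to $\tfrac{\tau}{1+\varepsilon/2}$. In fact you supply two details the paper leaves implicit --- the disjointness of $I-\alpha$ and $I-\beta$ (including the wrap-around case, which is where the hypothesis $\tau<\min\{\alpha,\beta\}$ is genuinely needed) and the final arithmetic verification --- both of which check out.
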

\begin{proof}
Let $\vu\in \Sigma_n$. For $\vx\in [\vu]$ and the sequence $S_1(\vx), \ldots, S_n(\vx)$ the pigeonhole principle  implies that there exists an interval $I\in \I$
such that 
\begin{equation*}
 h(\vx, I, n) \le n \tau. 
\end{equation*}
For the interval $I$ by using the similar argument as in the proof of Lemma \ref{lem:jump} there exists $\vv=\vv(\vu)\in \Sigma_{\floor{\varepsilon n}}$ such that for any $\vx\in [\vu\vv]$ the orbit $S_j(\vx)$ has property of \eqref{eq:jumper-i}.  It follows that 
\begin{equation*}
\frac{h(\vx, I, n+\floor{\varepsilon n})}{n+\floor{\varepsilon n}}\le \frac{n\tau}{n+\floor{\varepsilon n}},
\end{equation*}
which finishes the proof.
\end{proof}

Now we show the proof of Theorem \ref{thm:dimension}.  Fix $\varepsilon>0$.   For each $\vu\in \Sigma_n$ let  $\vv(\vu)\in \Sigma_{\floor{\varepsilon n}}$ be the same word as in Lemma \ref{lem:jumper2}.   For each $n\in \N$ let
\begin{equation*}
B_n=\bigcup_{\vu\in \Sigma_n} [\vu \vv(\vu)],
\end{equation*}
and 
\begin{equation*}
F=\bigcap_{k=1}^{\infty}\bigcup_{n=k}^{\infty} B_n.
\end{equation*}

Now we show that  $F\subseteq E_{\alpha, \beta}$. Since for $\vx\in F$ there exists a sequence  $n_k\nearrow \infty$ as $ k \rightarrow\infty$ such that $\vx\in B_{n_k}$ for all $ k\in \N$. Applying Lemma \ref{lem:jumper2} there exists an interval $I(\vx, n_k)\in \I$ such that 
\begin{equation}
\label{eq:density-bad}
\frac{h(\vx, I(\vx, n_k), n_k+\floor{\varepsilon n_k})}{n_k+\floor{\varepsilon n_k}}\le \frac{\tau}{1+\varepsilon/2}.
\end{equation}
Since there are $q$ intervals in $\I$, again by pigeonhole principle there exists an interval $I(\vx)$ such that there are infinitely many $k_i, i\in \N$ such that 
\begin{equation*}
 I(\vx)=I(\vx, n_{k_i}), \quad \forall i\in \N.
\end{equation*} 
Together with \eqref{eq:density-bad} we obtain that 
\begin{equation*}
\liminf_{N\rightarrow \infty}\frac{h(\vx, I(\vx), N)}{N} \le \frac{\tau}{1+\varepsilon/2},
\end{equation*}
which implies $\vx\in E_{\alpha, \beta}$.

Now we use some notation from Subsection \ref{sec:Cantor}. Let $n_k\nearrow\infty$ rapidly such that 
\begin{equation*}
\lim_{k\rightarrow \infty}\frac{\log \prod_{j=1}^k q_j}{\log q_{k+1}}=0.
\end{equation*}
For each $k\in \N$ let 
\begin{equation*}
E_k=\bigcap_{j=1}^{k} B_{n_j},
\end{equation*}
and 
\begin{equation*}
E=\bigcap_{k=1}^{\infty} E_k.
\end{equation*}
Applying Corollary \ref{cor:dimension} we obtain 
\begin{equation*}
\dim_H E =1/(1+\varepsilon).
\end{equation*}
Since $E\subseteq F \subset E_{\alpha, \beta}$, we have 
\begin{equation*}
\dim_{H}E_{\alpha, \beta} \ge 1/(1+\varepsilon).
\end{equation*}
By the arbitrary choice of $\varepsilon>0$ and the space $\Sigma$ has Hausdorff dimension one, we obtain $\dim_H E_{\alpha, \beta}=1$ which completes the proof.

\section{Further comments}

In this section we extend Theorem \ref{thm:uniform} to more variables setting. To be precise we show some notation first. In the following let $\alpha_1, \ldots, \alpha_{\ell} \in \T$ with $\ell\ge 2$. For convenience of notation we denote  
$$
\Sigma= \{(x_1, x_2, \ldots, x_n, \ldots): x_n\in \{1, 2, \ldots, \ell\}\}.
$$
For $\vx\in \Sigma$ and each $n\in \N$ define
$$
S_n(\vx)=\sum_{i=1}^{n} X_i(\vx) 
$$
where
\begin{equation*}
X_i(\vx) = \alpha_k \quad \text{provided} \quad x_i=k.
\end{equation*}
The sequence $S_n(\vx), n\in \N$ is called an $(\alpha_1, \ldots, \alpha_\ell)$-orbit. Again we may consider $\Sigma$ as the probability space, and $X_i, i\in \N$ is a sequence of  random variables. For each $i $ we ask that the random variable $X_i$ takes  value from the set $\alpha_1, \ldots, \alpha_{\ell}$ such that  each of them has the equal probability  $1/\ell$ to be chosen, i.e., for each $k=1, \dots, \ell$, one has 
$$
\PP(X_i=\alpha_k)=1/\ell.
$$

In analogy of Theorem \ref{thm:uniform} we have the following. We show the idea only, and omit the similar arguments here.

\begin{theorem}
\label{thm:uniform-more}
Suppose that at least one of $\alpha_k, k=1,\ldots, \ell$ is irrational.   Then  for almost all $\vx\in \Sigma$ the orbit $S_n(\vx)$ is uniformly distributed modulo one.
\end{theorem}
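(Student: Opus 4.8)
The plan is to adapt \emph{verbatim} the three-step scheme used for Theorem~\ref{thm:uniform}, the only genuine change occurring in the second moment computation. By Weyl's criterion (Lemma~\ref{lem:Weyl}) it suffices to show that for each fixed $h\in\Z\setminus\{0\}$ one has $N^{-1}\sum_{n=1}^{N}\e(hS_n(\vx))\to 0$ for $\PP$ almost all $\vx\in\Sigma$; since a countable union of null sets is null, this yields the full statement. So I would first establish the analogue of Lemma~\ref{lem:second-moment}, namely $\EE(|\sum_{n=1}^{N}\e(hS_n(\vx))|^2)=O_{\alpha_1,\ldots,\alpha_\ell,h}(N)$.

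To prove this, open the square exactly as in \eqref{eq:square} and use the independence of the $X_k$ to obtain, for $m<n$,
\[
\EE(\e(hS_n(\vx)-hS_m(\vx)))=\prod_{m<k\le n}\EE(\e(hX_k(\vx)))=\theta_h^{\,n-m},
\qquad \theta_h=\frac{1}{\ell}\sum_{j=1}^{\ell}\e(h\alpha_j).
\]
The case analysis is now driven by whether $|\theta_h|<1$ or $|\theta_h|=1$. In the first case the geometric bound $\sum_{1\le m<n\le N}|\theta_h|^{\,n-m}=O(N)$ closes the argument exactly as in Case~1 of Lemma~\ref{lem:second-moment}.

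The delicate point, and the main obstacle, is the degenerate case $|\theta_h|=1$. Here I would invoke that equality in the triangle inequality for the $\ell$ unit vectors $\e(h\alpha_1),\ldots,\e(h\alpha_\ell)$ forces them all to coincide, so that $\theta_h=\e(h\alpha_j)$ for every $j$. Choosing the index $j$ so that $\alpha_j$ is the irrational parameter supplied by the hypothesis, and using $h\neq 0$, gives $h\alpha_j\notin\Z$ and hence $\theta_h\neq 1$. Consequently $\EE(|\sum_{n=1}^{N}\e(hS_n(\vx))|^2)=|\sum_{n=1}^{N}\theta_h^{\,n}|^2\ll |\theta_h-1|^{-2}\ll 1$, precisely the bound of Case~2. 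This is the only place where the irrationality assumption is used, and it enters in exactly the same manner as for Theorem~\ref{thm:uniform}.

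With the second moment estimate in hand, the remainder transfers without modification: take the subsequence $N_i=i^{\rho}$ with $\rho>1$, apply the monotone convergence theorem to deduce $W_{N_i,h}(\vx)=o(N_i)$ for almost all $\vx$, and then fill the gaps between consecutive $N_i$ using the trivial estimate $N_{i+1}-N_i\ll N^{1-1/\rho}$. No step in this part depends on $\ell$, so the conclusion follows.
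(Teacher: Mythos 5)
Your proposal is correct and follows essentially the same route as the paper: the same second moment estimate with $\theta_h=\frac{1}{\ell}\sum_{j=1}^{\ell}\e(h\alpha_j)$, the same dichotomy $|\theta_h|<1$ versus $|\theta_h|=1$ (with the equality case forcing all $\e(h\alpha_j)$ to coincide and the irrationality hypothesis giving $\theta_h\neq 1$), and the same subsequence argument to conclude. No substantive differences.
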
 
\begin{proof}
Similarly in the proof of Theorem \ref{thm:uniform}, it is sufficient to show that     for any $h\ne 0$  one has 
\begin{equation*}
\EE(|\sum_{n=1}^N\e(h S_n(\vx)|^2)=O_{\alpha, \beta, h}(N).
\end{equation*}
Applying the similar argument as in the proof of Lemma \ref{lem:second-moment}, for any $m<n$ we have
\begin{equation*}
\EE(\e(h(S_n(\vx)-S_m(\vx)))=\widetilde{\theta}^{n-m},
\end{equation*}
where 
\begin{equation*}
\widetilde{\theta}=\frac{1}{\ell}(\e(h\alpha_1)+\ldots+\e(h\alpha_\ell)).
\end{equation*}
Proceed a case by case basis depending on whether $|\widetilde{\theta}|<1$ or not. 

If $|\widetilde{\theta}|<1$, then the argument in the proof of Lemma \ref{lem:second-moment} gives the desired bound. 

For the case $|\widetilde{\theta}|=1$, we derive that 
$$
\e(h\alpha_1)=\ldots=\e(h\alpha_\ell).
$$
We assume that  $\alpha_1$ is irrational, and we obtain
$$
\widetilde{\theta}=\e(h\alpha_1).
$$
Then  applying the  argument as in the proof of Lemma \ref{lem:second-moment}, we obtain the desired bound for this case also.  Furthermore  applying the similar argument as in the proof of~\ref{thm:uniform} we obtain the desired result.
\end{proof}

Note that the above probability measure on $\Sigma$ is a special  invariant measure on $\Sigma$ (under the shift map). 
It would be interesting to  know whether the Theorem \ref{thm:uniform-more} is still holds for  any invariant measure on $\Sigma$.

\section*{Acknowledgement}

The first and second authors thank Prof. De-jun Feng for hospitality while they were visiting The Chinese University of Hong Kong. They also thank hospitality from Hubei University.  We thank Prof. Igor Shparlinski for helpful comments.

The first author  is  supported in part by ARC Grant~DP170100786. The second author is supported by the NSFC (grants No. 11871200, 11571387).  The third author is supported by the NSFC (grants No. 11871200, 11271114, and 11671189).

\end{document}